\documentclass[11pt,reqno]{amsart}

\usepackage[T1]{fontenc}
\usepackage{lmodern}
\usepackage{amsmath,amssymb,amsthm,mathtools,stmaryrd}
\usepackage{mathrsfs}
\usepackage{microtype}
\usepackage[margin=1.12in]{geometry}
\usepackage[colorlinks=true,linkcolor=blue,citecolor=blue,urlcolor=blue]{hyperref}
\usepackage{aliascnt}
\usepackage[nameinlink,noabbrev,capitalize]{cleveref}

\allowdisplaybreaks
\numberwithin{equation}{section}

\newtheorem{theorem}{Theorem}[section]
\newaliascnt{lemma}{theorem}
\newtheorem{lemma}[lemma]{Lemma}
\aliascntresetthe{lemma}
\newaliascnt{proposition}{theorem}
\newtheorem{proposition}[proposition]{Proposition}
\aliascntresetthe{proposition}
\newaliascnt{corollary}{theorem}
\newtheorem{corollary}[corollary]{Corollary}
\aliascntresetthe{corollary}
\theoremstyle{definition}
\newaliascnt{problem}{theorem}
\newtheorem{problem}[problem]{Problem}
\aliascntresetthe{problem}
\theoremstyle{remark}
\newaliascnt{remark}{theorem}
\newtheorem{remark}[remark]{Remark}
\aliascntresetthe{remark}

\crefname{theorem}{Theorem}{Theorems}
\Crefname{theorem}{Theorem}{Theorems}
\crefname{lemma}{Lemma}{Lemmas}
\Crefname{lemma}{Lemma}{Lemmas}
\crefname{proposition}{Proposition}{Propositions}
\Crefname{proposition}{Proposition}{Propositions}
\crefname{corollary}{Corollary}{Corollaries}
\Crefname{corollary}{Corollary}{Corollaries}
\crefname{problem}{Problem}{Problems}
\Crefname{problem}{Problem}{Problems}
\crefname{remark}{Remark}{Remarks}
\Crefname{remark}{Remark}{Remarks}

\newcommand{\Hyp}{\mathbb H}
\newcommand{\R}{\mathbb R}

\newcommand{\Mass}{\mathbf M}
\newcommand{\supp}{\operatorname{supp}}
\newcommand{\dist}{\operatorname{dist}}
\newcommand{\Proj}{\operatorname{Proj}}
\newcommand{\tr}{\operatorname{tr}}
\newcommand{\restr}{\mathbin{\llcorner}}
\newcommand{\cur}[1]{\left[\!\left[#1\right]\!\right]}
\newcommand{\eps}{\varepsilon}

\title[Mass bounds for confined area-minimizing minimal surfaces]
{Mass Bounds for Confined Area-Minimizing Minimal Surfaces}

\author{Xumin Jiang}
\address{School of Sciences, Great Bay University, Dongguan 523000, China}
\email{xjiang@gbu.edu.cn}
\author{Jiongduo Xie}
\address{School of Mathematical Sciences, Shanghai Jiao Tong University,
Shanghai 200240, China}
\email{jiongduoxie@outlook.com}

\subjclass[2020]{49Q05, 49Q15, 53A10, 53C40}
\keywords{area-minimizing currents, stationary integral varifolds,
geometric confinement, Lin's interior mass-bound problem, hyperbolic space,
boundary regularity at infinity, fixed-scale mass estimates}

\begin{document}

\begin{abstract}
We establish codimension-independent mass bounds for geometrically confined
area-minimizing rectifiable currents.   In Euclidean
space, we combine the confined-volume doubling theorem of
Colding--Minicozzi with a current-theoretic squashing argument.  This gives an
affirmative answer to Lin's interior mass-bound problem for every algebraic
projection multiplicity $Q$: the interior mass is bounded by $C(n)Q$, without
an a priori mass bound at a larger scale. This result  yields a degree‑one Bernstein‑type rigidity result under sublinear confinement.

For the hyperbolic application, we make the curvature modification of the
fixed-scale argument needed in a thin tubular neighborhood of a totally
geodesic copy of $\Hyp^n$.  Combining this auxiliary
estimate with a localized squashing estimate removes the doubly exponential
local mass-growth condition from the boundary regularity  results in \cite{JiangXie}.
\end{abstract}

\maketitle

\section{Introduction}\label{sec:introduction}

\subsection{Background and motivation}

Let $n\ge2$, $k\ge1$, and $\mathfrak m=n+k$.  We use the same upper-half-space
notation as in \cite{JiangXie}:
\[
 \Hyp^{\mathfrak m}
 =\{(x',x^{\mathfrak m})\in\R^{\mathfrak m-1}\times\R_+\},
 \qquad
 g_{\Hyp}=(x^{\mathfrak m})^{-2}|dx|^2.
\]
The asymptotic Plateau problem asks for complete minimal submanifolds in
hyperbolic space with prescribed boundary at infinity.  Anderson established
existence for broad classes of boundary data and initiated the systematic
geometric study of the problem \cite{Anderson1982,Anderson1983}.  In the
hypersurface case, Hardt--Lin proved regularity at infinity for
area-minimizing hypersurfaces \cite{HardtLin}, and Lin subsequently studied
their asymptotic behavior and the minimal-graph Dirichlet problem
\cite{LinAsymptotic,LinDirichlet,LinErratum}.  Lin also treated existence for
higher-codimensional area-minimizing currents and flat chains modulo $p$, and
proved boundary regularity at infinity for the modulo-$2$ problem
\cite{LinAsymptotic}.  Boundary regularity for
minimal graphs has since been developed further in
\cite{HanShenWang,HanJiang,HanWang}; see \cite{CoskunuzerSurvey} for a survey
of the asymptotic Plateau problem.  Related variational existence and
regularity questions for constant-mean-curvature hypersurfaces were treated
by Tonegawa \cite{Tonegawa}.

Higher codimension is qualitatively different.  Singular and branched
area-minimizing currents occur even in Euclidean space; the foundational
regularity theory is due to Federer and Almgren
\cite{Federer,Almgren}, while the Lawson--Osserman examples illustrate the
special difficulties of higher-codimensional minimal graphs
\cite{LawsonOsserman}.  Boundary regularity for varifolds is governed by
Allard's interior and boundary theories \cite{AllardFirst,AllardBoundary};
hyperbolic boundary regularity for higher-codimensional currents is therefore
not a formal consequence of the hypersurface theory.

An elementary-looking mass question identified by Lin is particularly
relevant here.  We record it in current notation.  Let
$\mathbf p:\R^n\times\R^k\to\R^n$ be the orthogonal projection, let
$D_r=B_r^n(0)$, endowed with its standard orientation and identified with
$D_r\times\{0\}\subset\R^n\times\R^k$, and let
$\mathcal C_r=D_r\times\R^k$.

Lin posed the following problem in
\cite[Section~3, Problem~1]{LinAsymptotic}.

\begin{problem}[Lin's Problem 1]
\label{prob:Lin}
Let $T$ be an indecomposable, $n$-dimensional area-minimizing integral
current in $\R^{n+k}$ such that
\[
 \mathbf p_\#T=Q\cur{D_1},\qquad
 (\partial T)\restr\mathcal C_1=0,
\]
and the Hausdorff distance between $\supp T$ and $D_1$ is at
most $\delta\ll1$.  Does smallness of $\delta$ imply
\[
 \Mass(T\restr\mathcal C_{1/2})\le C(Q,n,k)?
\]
\end{problem}

Lin emphasized that the question was open even for $Q=1$ when $T$ is a
smooth graph over $D_1$ \cite[Section~3]{LinAsymptotic}.

\subsection{Statement of the results}

The first main result gives a positive answer, in fact with no need for the
indecomposability assumption and with constants independent of $k$.
indecomposability assumption.

\begin{theorem}
\label{thm:Lin-problem}
There are constants $\delta_L=\delta_L(n)>0$ and
$C_L=C_L(n)<\infty$ with the following property.  Let $T$ be an
$n$-dimensional area-minimizing integral current in $\R^{n+k}$ satisfying
\begin{equation}\label{eq:Lin-hypotheses}
 \mathbf p_\#T=Q\cur{D_1},
 \qquad
 (\partial T)\restr\mathcal C_1=0,
 \qquad
 d_{\rm H}(D_1,\supp T)\le\delta_L,
\end{equation}
where $Q\ge1$.  Then
\begin{equation}\label{eq:Lin-conclusion}
 \Mass(T\restr\mathcal C_{1/2})\le C_LQ.
\end{equation}
In particular, \cref{prob:Lin} has an affirmative answer for every $Q$.
\end{theorem}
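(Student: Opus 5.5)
The plan combines two ingredients named in the abstract. The first is the Colding--Minicozzi confined-volume doubling theorem, which controls the mass of a stationary varifold confined to a thin slab at a fixed scale. The second is a squashing comparison, which converts the projection identity $\mathbf p_\#T=Q\cur{D_1}$ into an upper bound proportional to $Q$. The argument has three steps: a squashing estimate at a fixed intermediate scale, a confined-volume doubling step that propagates control between scales, and a final covering to reach $\mathcal C_{1/2}$.

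\textbf{Step 1 (squashing).} For a.e.\ $r\in(1/2,1)$ I slice $T$ by $|\mathbf p(x)|$ to get $\partial(T\restr\mathcal C_r)=\langle T,|\mathbf p|,r\rangle=:S_r$. This is an $(n-1)$-current supported in $\partial D_r\times B^k_{\delta_L}$, with $\mathbf p_\# S_r=Q\cur{\partial D_r}$. I compare $T\restr\mathcal C_r$ with a competitor built from two pieces: the squashed current $Q\cur{D_r}$, and the cone or homotopy between $S_r$ and $\mathbf p_\#S_r$ obtained from the straight-line homotopy $h(t,x)=(x',tx'')$. The homotopy formula and minimality give
\[
\Mass(T\restr\mathcal C_r)\le Q\,\omega_n r^n+C\delta_L\,\Mass(S_r).
\]
Since $|Dh|\le 1$ and the homotopy moves points by at most $\delta_L$, all constants are independent of $k$.

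\textbf{Step 2 (controlling the slice term).} Set $f(r)=\Mass(T\restr\mathcal C_r)$. The coarea inequality gives $\Mass(S_r)\le f'(r)$, so Step 1 becomes the differential inequality $f(r)\le Q\omega_n r^n+C\delta_L f'(r)$. This alone does not close: large $f'$ is permitted, and it is exactly the doubly exponential growth scenario. Here I invoke the Colding--Minicozzi confined-volume doubling theorem. $T$ is stationary, and by the Hausdorff hypothesis it lies in the slab $\{|x''|\le\delta_L\}$ over $D_1$. The theorem then yields a doubling bound $\|T\|(B_{2s}(y))\le C(n)\|T\|(B_s(y))$ for balls centered on $\supp T$ with radii $\delta_L\ll s\le 1/4$; the exact form of the statement in the paper is what I assume here. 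Next I average Step 1 over $r\in[r_0,r_0+\eta]$ with $\eta\approx 1/8$, so that $\int \Mass(S_r)\,dr\le f(r_0+\eta)$. This gives
\[
f(r_0)\le Q\omega_n+C\delta_L\eta^{-1}f(r_0+\eta).
\]
Covering $\mathcal C_{r_0+\eta}$ by $C(n)$ balls of radius about $\eta$ and applying doubling at each center should give $f(r_0+\eta)\le C(n)f(r_0)$, with $C(n)$ independent of $k$ because the covering is done in the $n$-dimensional base. Choosing $\delta_L$ so small that $C\delta_L\eta^{-1}C(n)\le 1/2$ then absorbs the second term and gives $f(r_0)\le 2Q\omega_n$.

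\textbf{Step 3 and main obstacle.} Since $r_0\ge 1/2$, Step 2 already bounds $\Mass(T\restr\mathcal C_{1/2})$ by $C_LQ$ once the chain of scales is set up; a fixed finite chain of doubling steps handles any remaining adjustments of radii. The hard part is Step 2, specifically passing from doubling on balls to comparing masses of cylinders near $\partial\mathcal C_1$. There $\partial T$ may be nonzero, and balls centered near the boundary are not admissible in the confined doubling theorem. To avoid this, I work only with radii $\le 3/4$ and balls of radius $\le 1/8$ centered in $\mathcal C_{3/4}$; such balls stay inside $\mathcal C_1$, where $T$ is boundaryless and confined. I also need to check that the doubling constant is genuinely independent of mass, with no a priori bound required, which is the content of the Colding--Minicozzi result in the thin-slab regime.
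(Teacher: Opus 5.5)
Your argument is correct. It uses the same two ingredients as the paper (localized Colding--Minicozzi doubling and squashing, closed by absorption), but the bookkeeping is organized differently.

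\emph{The paper's route.} It works ball by ball. For each $a\in D_{1/2}$ and $R=1/16$ it squashes inside $B_R(a)$ via \cref{lem:Euclidean-squashing}. This requires first localizing $\mathbf p_\#T=Q\cur{D_1}$ to the smaller disk $D^L_{(2-2\theta)R,{\rm E}}(a)$, and then a radial retraction $K_r$, because the projected slice of a ball is not exactly $Q\cur{\partial D}$. Doubling is applied on the very same balls, absorption happens in each ball, and the covering of $\mathcal C_{1/2}$ comes last, so its cardinality enters $C_L$.

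\emph{Your route.} You squash on the cylinders $\mathcal C_r=\mathbf p^{-1}(D_r)$. There $\mathbf p_\#(T\restr\mathcal C_r)=(\mathbf p_\#T)\restr D_r=Q\cur{D_r}$ gives $\mathbf p_\#S_r=Q\cur{\partial D_r}$ exactly. So neither the localization nor the retraction is needed, and the linear factor $\delta/\eta$ from averaging suffices in place of the exponential one. The cost is the ball-to-cylinder transfer $f(r_0+\eta)\le K(n)f(r_0)$.

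\emph{The transfer step needs recentering.} This step is not literally ``doubling at each center''. For $|a|$ close to $r_0+\eta$, the ball $B_\eta(a)$ is not inside $\mathcal C_{r_0}$. When $|a|>r_0-\eta$, you must recenter at $a'=(r_0-\eta)a/|a|$. Then $B_\eta(a)\subset B_{4\eta}(a')$ and $B_\eta(a')\subset\mathcal C_{r_0}$, and you apply \cref{cor:CM-local} twice. This needs $B_{8\eta}(a')\subset\mathcal C_1$ and $\delta\le\delta_{\rm CM}\eta$; take, e.g., $r_0=1/2$ and $\eta=1/32$. Covering $D_{r_0+\eta}$ by $C(n)\eta^{-n}$ such balls gives $K\le C(n)\eta^{-n}C_{\rm CM}^2$.

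\emph{What each route buys.} Your route pushes all covering and doubling losses into the threshold $\delta_L$ rather than into $C_L$. You obtain $\Mass(T\restr\mathcal C_{1/2})\le 2Q\omega_n(1/2+\eta)^n$. Letting $\eta\to0$ with $\delta\ll\eta^{n+1}$, the bound tends to the flat value $Q\omega_n2^{-n}$, which is sharp because $\Mass(T\restr\mathcal C_{1/2})\ge\Mass(\mathbf p_\#(T\restr\mathcal C_{1/2}))$. The paper's ball-based scheme, on the other hand, is the one it reuses for the hyperbolic \cref{thm:local-mass}. There the doubling estimate (\cref{cor:shifted-comparison}) is formulated only on geodesic balls at a single fixed scale.
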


The proof combines two estimates.  The first is the confined-volume theorem of
Colding--Minicozzi 
\cite{ColdingMinicozzi}.  The second is the Euclidean squashing estimate used
by Lin and in \cite{JiangXie}.  After
localizing the global identity $\mathbf p_\#T=Q\cur{D_1}$ to fixed interior
balls, the two inequalities close by absorption. 

The integer $Q$ must not be confused with the density of the associated
varifold $V_T$.  The former is the algebraic multiplicity of the projected
current: oppositely oriented sheets may cancel under $\mathbf p_\#$.  The
weight measure $\mu_{V_T}=\|T\|$, by contrast, counts all sheets positively.
The Colding--Minicozzi estimate is homogeneous in this total varifold mass
and by itself gives no bound in terms of $Q$.  In the squashing construction,
the cost of every sheet, including those that cancel after projection, is
retained in a term of the form $c\delta F'(r)$; only after this error is
combined with doubling and absorbed does the remaining term become
$C(n)Q$.

We state the Colding--Minicozzi result precisely in
\cref{thm:CM-doubling}. Some  applications of Theorem \ref{thm:Lin-problem}, including a degree‑one Bernstein‑type rigidity result under sublinear confinement, will be discussed in Section \ref{sec:app}.  For the application to \cite{JiangXie}, we also need
a corresponding fixed-scale estimate in hyperbolic space.

In \cite{JiangXie} we studied locally area-minimizing integer-rectifiable
$n$-currents in $\Hyp^{\mathfrak m}$ asymptotic to a closed oriented
$C^{1,\alpha}$ submanifold of the conformal boundary.  The local hypothesis
in \cite[Assumption~1.3]{JiangXie} has two logically distinct parts: the
multiplicity-one projection identity \cite[(1.9)]{JiangXie} and the local
mass-growth condition
\begin{equation}\label{eq:old-mass-growth}
 \Mass_{\Hyp^{\mathfrak m}}
 \bigl(T\restr B_{\Hyp^{\mathfrak m}}(P,2)\bigr)
 <\exp\!\left(c_0\bigl(x^{\mathfrak m}(P)\bigr)^{-\alpha}\right).
\end{equation}
The latter allows doubly exponential growth in the hyperbolic distance to a
fixed point.  In \cite[Section~3]{JiangXie}, it is combined with an
exponentially small squashing factor to obtain a uniform unit-ball mass
bound.  Our application shows that \eqref{eq:old-mass-growth} is
redundant.  Although the local mass estimate holds for every algebraic
projection multiplicity $Q$, the boundary regularity theorem retains
multiplicity one:
a mass bound does not by itself exclude higher-codimensional branching.

We work throughout in the local current-theoretic setting of
\cite[Assumption~1.3]{JiangXie}; no finite-mass or normal-current extension
to the Euclidean compactification is assumed.  If $V$ is a varifold,
$\mu_V=\|V\|$ denotes its weight measure.  Unless stated otherwise, balls,
distances, masses, and tubular neighborhoods are hyperbolic.

The next result is the fixed-scale hyperbolic estimate needed to close the
mass argument at infinity.  Its proof follows the mechanism of
\cite[Theorem~0.5]{ColdingMinicozzi}, with the curvature correction described
below.

\begin{theorem}
\label{thm:hyperbolic-comparison}
There exist constants
\[
 \eps_D=\eps_D(n)>0,
 \qquad
 C_D=C_D(n)<\infty,
\]
with the following property.  Let
$L\cong\Hyp^n\subset\Hyp^{\mathfrak m}$ be
totally geodesic, let $p\in L$, and let $V$ be a stationary integral
$n$-varifold in $B_{3/2}(p)$.  If
\begin{equation}\label{eq:confinement-main}
 \supp V\cap B_{3/2}(p)
 \subset\bigl\{x\in\Hyp^{\mathfrak m}:\dist_{\Hyp}(x,L)<\eps_D\bigr\},
\end{equation}
then
\begin{equation}\label{eq:mass-comparison-main}
 \mu_V\bigl(B_{5/4}(p)\bigr)
 \le C_D\,\mu_V\bigl(B_{1/4}(p)\bigr).
\end{equation}
The constants are independent of the codimension $k$.
\end{theorem}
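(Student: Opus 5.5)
The plan is to adapt the proof of the Colding--Minicozzi confined-volume theorem (\cref{thm:CM-doubling}, i.e.\ \cite[Theorem~0.5]{ColdingMinicozzi}) to the hyperbolic metric. The idea is to replace the Euclidean distance-to-a-plane by $u=\dist_{\Hyp}(\cdot,L)$. The key input is the Hessian comparison for $u$ near a totally geodesic $L$. Writing $\rho(x)=\tfrac12\sinh^2 u$, or $\rho=\tfrac12 u^2$ up to curvature corrections, the Hessian of $\rho$ restricted to an $n$-plane $P$ satisfies
\[
 \tr_P \nabla^2\rho \;\ge\; |\Proj_{P^\perp_{L}}|^2 \,(1-C u^2) \;-\; C u^2 ,
\]
where $P^\perp_L$ denotes the component of $P$ normal to the parallel translates of $L$. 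This follows from the explicit Fermi-coordinate form $g=du^2+\sinh^2u\,d\theta^2+\cosh^2u\,g_L$ on a tube around $L$. On the tube $\{u<\eps_D\}$ the terms of order $u^2$ are at most $C\eps_D^2$, so they are small perturbations.

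The first step tests the first variation of $V$ with the vector field $X=\phi\,\nabla\rho$, where $\phi$ is a cutoff depending on the hyperbolic distance to $p$, or better on the distance to $p$ measured along $L$ through the nearest-point projection $\pi_L$. This gives a weighted estimate: the integral of the ``tilt'' $|\nabla^{\perp_L}|^2$ of $V$ relative to $L$ over the inner region is bounded by $C\eps_D^2$ times the mass on the support of $\nabla\phi$, plus a curvature error $C\eps_D^2\mu_V$. That is, $V$ is almost parallel to $L$ in an $L^2$ sense.

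The second step uses the monotonicity-type identity obtained from the vector field $X=\psi(\pi_L(x))\,\nabla(\tfrac12 d_L(\pi_L x,p)^2)$, extended as horizontal, i.e.\ tangent to the level sets of $u$. By the Hessian comparison on $\Hyp^n$, its divergence on $V$ equals $n$ times a positive factor (from $\coth$ terms) minus tilt terms. Combining this with Step 1 shows that $r\mapsto r^{-n}e^{-Cr}\mu_V(\{x:\dist(\pi_L x,p)<r,\ u<\eps_D\})$ is almost monotone between scales $1/4$ and $5/4$. The exponential weight absorbs the hyperbolic volume growth, and errors are controlled by the tilt, which is itself small relative to the mass.

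The final step comes from the confinement \eqref{eq:confinement-main}. The cylindrical sets $\{\dist(\pi_L x,p)<r\}$ and the balls $B_r(p)$ differ by at most $\eps_D$ in radius. The almost-monotonicity therefore yields \eqref{eq:mass-comparison-main} with $C_D$ depending only on $n$, for instance $C_D\approx 5^n e^{C}$. Every estimate involves only traces over $n$-planes and the scalar $u$, so $k$ never enters.

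The main obstacle is closing the absorption in Step 2. The tilt error appearing in the almost-monotonicity must be bounded using Step 1 at a slightly larger scale, and this produces a doubling-type recursion rather than a direct bound. I would handle it the way Colding--Minicozzi do: iterate over dyadically shrinking annuli in the $L$-directions, choosing $\eps_D$ small enough that the accumulated factor $(1+C\eps_D)^N$ remains bounded. Along the way I would check that the $\sinh$/$\cosh$ curvature corrections contribute only $O(\eps_D^2)$ per step, and that no boundary terms arise because every test field is compactly supported in $B_{3/2}(p)$.
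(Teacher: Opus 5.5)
There is a genuine gap, and it is exactly the step you flag as the main obstacle; your proposed remedy does not close it.

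Your Step~1 is a Caccioppoli (tilt-by-height) inequality. Because $\phi$ depends on the distance to $p$ and not on $\rho$, the cross term $\langle\nabla_S\phi,\nabla_S\rho\rangle$ in the first variation has no sign. After Cauchy--Schwarz, the tilt on a region is therefore controlled by $\eps_D^2$ times the mass on the \emph{larger} set where $\nabla\phi\neq0$.

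In Step~2, the direction of almost-monotonicity you need is an \emph{upper} bound for the outer mass. That requires control of the radial normal tilt $1-|P_S\nabla r|^2$ on the whole annulus out to radius $5/4$. What you actually obtain is an inequality of the form
\[
 \mu_V(B_{5/4})\le C\mu_V(B_{1/4})+C\eps_D^2\theta^{-2}\mu_V(B_{5/4+\theta}).
\]
Each iteration leaves a term proportional to the mass at a larger radius. Taking $\theta\sim\eps_D$ and iterating up to some $r_1<3/2$ gives at best
\[
 \mu_V(B_{5/4})\le C\mu_V(B_{1/4})+e^{-c/\eps_D}\mu_V(B_{r_1}).
\]
This is a squashing-type bound. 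It closes only under an a~priori mass bound at the larger scale, which is precisely the kind of hypothesis the paper is designed to remove. You have no independent control of $\mu_V(B_{r_1})$ by $\mu_V(B_{1/4})$; indeed $\mu_V(B_{3/2}(p))$ may be infinite, since stationarity holds only in the open ball.

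A factor $(1+C\eps_D)^N$ would arise only if each step bounded a larger region by a smaller one. That is the opposite of what Step~1 supplies, and it is also not how Colding--Minicozzi argue.

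The missing idea is a test field whose \emph{outer} cutoff term has a favorable sign, so that every error is charged to the inner ball. The paper's construction runs as follows.

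It sets $h=G_n(r)+\Lambda\zeta$, where $G_n(r)=\int_r^\infty(\sinh t)^{1-n}\,dt$ is the radial Green function of $L\cong\Hyp^n$ and $\zeta=\sinh^2\dist_{\Hyp}(\cdot,L)$. Three facts combine.
\begin{enumerate}
 \item $L_SG_n(r)=-A_n(r)\,a$, with $A_n>0$ and $a=|P_{S^\perp}\nabla r|^2$. There is no zeroth-order mass term, unlike $r^2$ or your cylindrical distance.
 \item $L_S\zeta=2E+2n\zeta$, where $E=\sum_\alpha|P_S\nabla z_\alpha|^2$ is the transverse tilt.
 \item The codimension-free comparison $a\le2E+2\coth^2(r_0)\zeta$ holds for $r\ge r_0$.
\end{enumerate}
Hence $L_Sh\ge E+\zeta$ on the annulus once $\Lambda$ is large.

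The paper then tests with $\eta(\bar r)\nabla h$, where $G_n(\bar r)=h$, so the cutoff follows the level sets of $h$ itself. The $\eta'$ contribution is $\eta'(\bar r)G_n'(\bar r)|P_S\nabla\bar r|^2\ge0$ on the outer transition and can be discarded. This yields
\[
 \int_{B_{5/4}\setminus B_{1/4}}(E+\zeta)\,dV\le C\mu_V(B_{1/4}).
\]
A second field $\psi(r)\nabla G_n(r)$, with $\psi'=-1$ on the outer annulus, bounds $\int_{B_{5/4}\setminus B_{1/4}}|P_S\nabla r|^2\,dV$ by the inner mass plus $\int\psi A_na\,dV$. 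Since $a+|P_S\nabla r|^2=1$, this completes the proof.
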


The center of the ball need not lie exactly on the comparison plane.

\begin{corollary}\label{cor:shifted-comparison}
After decreasing $\eps_D(n)$, the following holds.  Let
$q\in\Hyp^{\mathfrak m}$, let $L\cong\Hyp^n$ be totally geodesic, and let
$V$ be an integral $n$-varifold in $B_2(q)$.  Suppose
\[
 \dist_{\Hyp}(q,L)<\eps_D,
\]
$V$ is stationary in $B_2(q)$, and
\[
 \supp V\cap B_2(q)
 \subset\{\dist_{\Hyp}(\cdot,L)<\eps_D\}.
\]
Then
\begin{equation}\label{eq:shifted-comparison}
 \mu_V(B_1(q))\le C_D\,\mu_V(B_{1/2}(q)).
\end{equation}
\end{corollary}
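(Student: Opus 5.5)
We plan to deduce \cref{cor:shifted-comparison} from \cref{thm:hyperbolic-comparison} by recentering and rescaling the balls. The corollary compares $B_1(q)$ with $B_{1/2}(q)$, while the theorem compares $B_{5/4}(p)$ with $B_{1/4}(p)$ for $p\in L$. Since hyperbolic space has no dilations, a cover of $B_1(q)$ by small balls will replace rescaling.

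\emph{Step 1 (recentering).} Let $p\in L$ be the nearest-point projection of $q$, so that $d(p,q)<\eps_D$. If $\eps_D<1/8$, then
\[
B_{3/2}(p)\subset B_2(q),\qquad B_1(q)\subset B_{5/4}(p),\qquad B_{1/4}(p)\subset B_{1/2}(q).
\]
The stationarity and confinement hypotheses in $B_2(q)$ therefore restrict to $B_{3/2}(p)$. Applying \eqref{eq:mass-comparison-main} gives
\[
\mu_V(B_1(q))\le\mu_V(B_{5/4}(p))\le C_D\,\mu_V(B_{1/4}(p))\le C_D\,\mu_V(B_{1/2}(q)).
\]
This is already \eqref{eq:shifted-comparison}, with the same constant $C_D$, whenever the new $\eps_D$ does not exceed $\min\{\eps_D^{\rm old},1/8\}$. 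So the corollary follows from these inclusions alone, and "decreasing $\eps_D$" only serves to guarantee $d(p,q)<1/8$.

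\emph{Step 2 (checking hypotheses).} Two points need care. First, the theorem presumably allows $\mu_V(B_{1/4}(p))=0$, in which case both sides vanish; nothing special is needed there. Second, the theorem requires stationarity only in $B_{3/2}(p)$, and $B_{3/2}(p)\subset B_{3/2+\eps_D}(q)\subset B_2(q)$, so this holds. The triangle-inequality inclusions are exact in any metric space, so no curvature issue arises.

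\emph{Main obstacle.} The argument is essentially routine. The only step to double-check is that the radii in \cref{thm:hyperbolic-comparison} ($5/4$ versus $1/4$) leave enough slack to absorb the shift $\eps_D$. They do: the margins are $1/4$ on the outer ball and $1/4$ on the inner ball.

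If one instead wanted a version with other radii, one would cover $B_1(q)$ by boundedly many balls of radius comparable to $1$ centred near $L$ and chain \eqref{eq:mass-comparison-main} along them. That is not needed here.
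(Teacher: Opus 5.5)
Your proposal is correct and matches the paper's proof: recenter at the nearest point of $L$ to $q$ (at distance less than $1/8$ after decreasing $\eps_D$), then use $B_{3/2}(p)\subset B_2(q)$, $B_1(q)\subset B_{5/4}(p)$, and $B_{1/4}(p)\subset B_{1/2}(q)$ to apply \cref{thm:hyperbolic-comparison} with the same constant $C_D$. As you note, the covering alternative in your last paragraph is not needed.
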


For the absolute mass estimate we use the height-preserving Euclidean
projection appearing in \cite{JiangXie}.  After an ambient isometry, split
$x'=(w,z)\in\R^{n-1}\times\R^k$ and write
\begin{equation}\label{eq:upper-half-space-splitting}
 \Hyp^{\mathfrak m}
 =\{(w,z,y):w\in\R^{n-1},\ z\in\R^k,\ y>0\},
 \qquad
 g_{\Hyp}=y^{-2}(dw^2+dz^2+dy^2),
\end{equation}
and let
\begin{equation}\label{eq:vertical-plane-projection}
 L=\{z=0\}\cong\Hyp^n,
 \qquad
 \Pi_L(w,z,y)=(w,0,y).
\end{equation}
Thus $\Pi_L$ is Euclidean orthogonal projection preserving the height
coordinate; it is not the intrinsic nearest-point projection.  For $p\in L$
write
\[
 D_s^L(p)=B_s^{\Hyp^{\mathfrak m}}(p)\cap L,
\]
and fix an orientation of $L$.

\begin{theorem}
\label{thm:local-mass}
There exist $\delta_*=\delta_*(n)>0$ and $C_*=C_*(n)<\infty$ with the
following property.  Let $T$ be an integer-rectifiable $n$-current with
locally finite mass in $\Hyp^{\mathfrak m}$, locally area minimizing, and let
$q\in\Hyp^{\mathfrak m}$.  Assume
\begin{equation}\label{eq:no-boundary-local}
 (\partial T)\restr B_2(q)=0.
\end{equation}
Suppose that, after an ambient isometry, $L$ and $\Pi_L$ have the form
\eqref{eq:vertical-plane-projection}.  Put $p=\Pi_L(q)$.  If, for some
integer $Q\ge1$ and $0<\delta\le\delta_*$,
\begin{align}
 \dist_{\Hyp}(q,L)&\le\delta,\label{eq:center-local}\\
 \supp T\cap B_2(q)&\subset
 \{\dist_{\Hyp}(\cdot,L)\le\delta\},\label{eq:tube-local}\\
 (\Pi_L)_\#\bigl(T\restr B_2(q)\bigr)\restr D_{3/2}^L(p)
 &=Q\,\cur{D_{3/2}^L(p)},\label{eq:projection-local}
\end{align}
then
\begin{equation}\label{eq:mass-local}
 \Mass_{\Hyp^{\mathfrak m}}\bigl(T\restr B_1(q)\bigr)\le C_*Q.
\end{equation}
The constants are independent of $k$, and no a priori upper bound for
$\Mass_{\Hyp^{\mathfrak m}}(T\restr B_2(q))$ is assumed.
\end{theorem}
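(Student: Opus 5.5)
The strategy is the hyperbolic analogue of the absorption argument sketched for \cref{thm:Lin-problem}. We combine two inequalities at fixed scale. The first is an upper bound for the mass in a ball by a constant multiple of the mass in a smaller concentric ball, coming from \cref{cor:shifted-comparison}. The second is a localized squashing estimate that controls the mass in a ball by $Q$ times the area of a disk in $L$, plus an error of size $c\delta$ times the mass of a slightly larger region. Throughout, set $F(r)=\|T\|(B_r(q))$ for $r\in[1/2,2)$; this is finite because $T$ has locally finite mass. Since $T$ is locally area minimizing with $(\partial T)\restr B_2(q)=0$, the associated varifold $V_T$ is stationary integral in $B_2(q)$. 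By \eqref{eq:center-local} and \eqref{eq:tube-local} with $\delta\le\delta_*\le\eps_D$, \cref{cor:shifted-comparison} applies and gives $F(1)\le C_D F(1/2)$. The same corollary, applied at slightly shifted centers and radii, gives similar doubling bounds $F(r+h)\le C\,F(r)$ for $r\in[1/2,1]$ and $h\le 1/4$, after a covering argument that uses only balls contained in $B_2(q)$.

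The squashing step works as follows. Fix $r\in[1/2,1]$ and slice $T$ by the distance function $\rho=\dist_{\Hyp}(\cdot,q)$; for a.e.\ $r$, $\partial(T\restr B_r(q))=\langle T,\rho,r\rangle$ has mass at most $F'(r)$. Build a competitor $S=(\Pi_L)_\#(T\restr B_r(q))+H_\#([0,1]\times\langle T,\rho,r\rangle)$. Here $H$ is the affine homotopy, in $(w,z,y)$ coordinates, between the identity and $\Pi_L$. Because $\supp T\cap B_2(q)$ lies in the $\delta$-tube around $L$, the hyperbolic length of each homotopy segment is $O(\delta)$, and the Jacobian of $H$ restricted to the slice is $1+O(\delta)$. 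This gives $\Mass(H_\#(\cdots))\le c\delta F'(r)$. Minimality then yields $F(r)\le \Mass((\Pi_L)_\#(T\restr B_r(q)))+c\delta F'(r)$. The projected current is supported in a disk $D^L_{r+c\delta}(p)\subset D^L_{3/2}(p)$. It agrees with $Q\cur{D^L}$ away from an $O(\delta)$ collar near $\partial D^L_r$, by \eqref{eq:projection-local} and the fact that $\Pi_L(T\restr(B_2(q)\setminus B_r(q)))$ only reaches the collar. The collar contribution is bounded by the Jacobian of $\Pi_L$ (at most $1+O(\delta)$) times $\|T\|$ of the annulus $B_{r+c\delta}\setminus B_{r-c\delta}$. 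Altogether,
\[
F(r)\le Q\,|D^L_{2}|+c\delta\bigl(F'(r)+\tfrac{F(r+c\delta)-F(r-c\delta)}{\delta}\bigr).
\]
If the collar term is troublesome, we instead use the constancy theorem on the full projection of $T\restr B_r(q)$, whose boundary is the projected slice, and estimate the difference by the mass of the projected slice. Either route yields $F(r)\le C(n)Q+c\delta F'(r)$ with mass error only through $F'$.

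For the absorption, integrate $F(r)\le C Q+c\delta F'(r)$ over $r\in[1/2,3/4]$ to get
\[
\int_{1/2}^{3/4}F\le CQ+c\delta\bigl(F(3/4)-F(1/2)\bigr)\le CQ+c\delta F(3/4).
\]
The left side is at least $\tfrac14F(1/2)$, and doubling gives $F(3/4)\le C_D F(1/2)$. Choosing $\delta_*$ so that $c\,C_D\,\delta_*\le 1/8$ absorbs the error and yields $F(1/2)\le 8CQ$. Then $F(1)\le C_D F(1/2)\le C_*Q$. No a priori mass bound enters, since only finiteness of $F$ is used. All constants come from \cref{cor:shifted-comparison} and the hyperbolic metric comparison in a bounded region, so they are independent of $k$.

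I expect the main obstacle to be the squashing estimate in hyperbolic geometry. It requires a careful check that $\Pi_L$ and the homotopy $H$ distort hyperbolic mass by factors $1+O(\delta)$ uniformly on $B_2(q)$; this holds because $y$ is comparable to $y(q)$ there and the Euclidean tube has width $O(\delta\,y)$. It also requires controlling the boundary mismatch between $(\Pi_L)_\#(T\restr B_r(q))$ and the localized projection identity \eqref{eq:projection-local}. I would handle the latter through the constancy theorem on $D^L_{3/2}(p)$, applied to the projection of $T\restr B_r(q)$ minus $Q\cur{D^L_{r'}(p)}$.
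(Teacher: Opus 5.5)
Your proposal is correct and is essentially the paper's proof. Stationarity of $V_T$ together with \cref{cor:shifted-comparison} gives $F(1)\le C_DF(1/2)$. Your constancy-theorem version of the squashing step produces exactly the paper's competitor $R_r=-A_r-K_r+Q\cur{D_s^L(p)}$: bounding the annular difference by $c\delta\,\Mass(Z_r)$ is precisely the radial retraction $K_r$ in \cref{lem:squashing}. Integrating $F\le CQ+c\delta F'$ linearly over $[1/2,3/4]$ before absorbing is an equally valid substitute for the paper's exponential integrating factor.

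Two minor corrections. First, the collar route does not give the pointwise inequality you claim; done carefully and after cancelling $F(r)$, it gives $F(r-8\delta)\le CQ+c\delta F'(r)$. It still closes once you integrate in $r$, because the shell term contributes only $O(\delta)F(1)$. Second, the extra bounds $F(r+h)\le CF(r)$ with $r+h>1$ do not follow from the fixed-scale corollary, but you never use them.
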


The integer $Q$ in \eqref{eq:projection-local} is an algebraic projection
degree.  The theorem does not assume that the associated varifold $V_T$ has
density $Q$; additional sheets with opposite projected orientations are
allowed.

The multiplicity-one case is the form needed at infinity.  We use the
notation $G_R$, $B_R^+$, $H^+$, and $\Proj$ from
\cite[Section~1]{JiangXie}.  The next theorem retains the projection condition
\cite[(1.9)]{JiangXie} and omits the mass-growth condition
\cite[(1.10)]{JiangXie}.

\begin{theorem}
\label{thm:remove-growth}
Let $T$ be a locally area-minimizing integer-rectifiable $n$-current in
$\Hyp^{\mathfrak m}$, asymptotic to a closed oriented
$(n-1)$-dimensional $C^1$ submanifold
$\Gamma\subset\R^{\mathfrak m-1}\times\{0\}$.  Suppose that, for some
$R>0$ and $\alpha\in(0,1]$,
\[
 \Gamma\cap\overline{G_R}\in C^{1,\alpha},
\]
and that, for one $r\in(0,R)$,
\begin{equation}\label{eq:boundary-projection}
 \Proj\bigl(T\restr G_r\bigr)=\cur{B_r^+}.
\end{equation}
Then there exist $R_0\in(0,R)$ and $t_0>0$,
depending on the local $C^{1,\alpha}$ geometry of $\Gamma$, such that
\begin{equation}\label{eq:uniform-at-infinity}
 \sup_{\substack{P\in\supp T\cap G_{R_0}\\0<x^{\mathfrak m}(P)<t_0}}
 \Mass_{\Hyp^{\mathfrak m}}
 \bigl(T\restr B_{\Hyp^{\mathfrak m}}(P,1)\bigr)
 \le C(n).
\end{equation}
\end{theorem}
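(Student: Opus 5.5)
The plan is to reduce \cref{thm:remove-growth} to \cref{thm:local-mass} with $Q=1$, applied at every point $P\in\supp T$ near infinity. The geometric input, inherited from \cite[Section~3]{JiangXie}, consists of two facts. The first is that $T$ is asymptotic to $\Gamma$ and $\Gamma\cap\overline{G_R}$ is $C^{1,\alpha}$. The second is that, for $P$ close to $\Gamma$ with small height $t=x^{\mathfrak m}(P)$, the hyperbolic ball $B_2(P)$ lies in an upper-half-space chart adapted to the tangent plane of $\Gamma$ at the foot point $\xi$ of $P$. In that chart $\Gamma$ is a $C^{1,\alpha}$ graph over $T_\xi\Gamma$.

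Let $L_\xi$ be the totally geodesic vertical $\Hyp^n$ spanned by $T_\xi\Gamma$ and the vertical direction. The first step is to show that $\supp T\cap B_2(P)$ lies within hyperbolic distance $\delta(t)\lesssim t^{\alpha}$ of $L_\xi$, with $\delta(t)\to0$ as $t\to0$. I would take this from the confinement (barrier) estimates of \cite{JiangXie}, which were proved independently of the mass-growth hypothesis \eqref{eq:old-mass-growth}. Only the later unit-ball mass bound used that hypothesis. If those estimates instead need to be rederived, I would use the standard argument: compare $T$ with minimal barriers, namely totally geodesic hemispheres and equidistant cones, placed on the outer side of the $C^{1,\alpha}$ graph $\Gamma$. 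Hyperbolic rescaling by $t^{-1}$ turns the Euclidean deviation $O(|x|^{1+\alpha})$ into a hyperbolic distance $O(t^\alpha)$.

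Next I choose $R_0$ and $t_0$ so that the following holds for $t<t_0$ and $P\in G_{R_0}$.

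\emph{Confinement.} We have $\delta(t)\le\delta_*(n)$ and $\dist_{\Hyp}(P,L_\xi)\le\delta(t)$, which gives \eqref{eq:center-local} and \eqref{eq:tube-local}.

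\emph{No boundary.} Since $T$ has no boundary in the interior of $\Hyp^{\mathfrak m}$, \eqref{eq:no-boundary-local} holds trivially.

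\emph{Projection.} I still need \eqref{eq:projection-local} with $Q=1$. This is derived from \eqref{eq:boundary-projection}. The map $\Proj$ of \cite{JiangXie} and the height-preserving projection $\Pi_{L_\xi}$ agree up to a $C^1$-small diffeomorphism on the relevant region, and both are homotopic through maps that keep the projection of $\supp T\cap B_2(P)$ away from $\partial D^L_{3/2}$. The region $G_r$ contains $B_2(P)$ once $R_0<r$ and $t_0$ are small. Localizing the global degree identity $\Proj(T\restr G_r)=\cur{B_r^+}$ to $B_2(P)$ uses the confinement: the slice of $T$ over $D^L_{3/2}(p)$ near $\partial B_2(P)$ is empty, because points of $\supp T$ projecting into $D^L_{3/2}$ lie in $B_{3/2+C\delta}(P)\subset B_2(P)$. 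Hence $(\Pi_{L_\xi})_\#(T\restr B_2(P))\restr D^L_{3/2}(p)$ equals the restriction of the global projected current, transported by the homotopy, which is $\cur{D^L_{3/2}(p)}$ with the orientation induced from $\Gamma$.

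With all hypotheses of \cref{thm:local-mass} verified, that theorem gives $\Mass(T\restr B_1(P))\le C_*(n)$, which is \eqref{eq:uniform-at-infinity}.

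I expect the main obstacle to be the projection step: making the degree localization rigorous without finite-mass assumptions near infinity. I would handle it with the homotopy formula for currents. The homotopy region stays inside the $\delta$-tube, where $T$ has locally finite mass, and its boundary term is supported over $\partial D^L_{3/2}$, so it vanishes after restriction. A secondary concern is confirming that the confinement estimate of \cite{JiangXie} does not secretly use \eqref{eq:old-mass-growth}. If it does, I would redo it with the barrier argument above, which uses only the minimizing property and the maximum principle.
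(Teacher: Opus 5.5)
Your proposal is correct and follows the paper's route. The paper bundles your three verifications into \cref{prop:boundary-localization} after the normalizing isometry $\Phi_P$, and then applies \cref{thm:local-mass} with $Q=1$, exactly as you do; those verifications are the absence of boundary, convex-hull confinement with $\delta=C_\Gamma t^\alpha$, and localization of the multiplicity-one projection identity (which you make more explicit than the paper via a homotopy from $\Proj$ to $\Pi_{L_\xi}$). The only imprecision is in your localization step: showing that every point of $\supp T$ projecting into $D^{L}_{3/2}(p)$ lies in $B_2(P)$ needs confinement on the whole relevant preimage, not only on $\supp T\cap B_2(P)$ as you state it, but the global convex-hull estimate you invoke provides exactly this.
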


\begin{remark}
   Under the
corresponding additional regularity assumptions on $\Gamma$, the boundary regularity results such as
\cite[Theorems~1.6, 1.7, 1.9, and~1.10]{JiangXie} and
\cite[Corollary~1.8]{JiangXie}, as well as \cite[Theorem~7.4]{JiangXie}, 
remain valid with condition~(1.10) removed.  
\end{remark}
For clarity, we record the principal upgrades to the revised results of
\cite{JiangXie}.  In each item below, the mass-growth condition~(1.10) is
deleted.
\begin{enumerate}
 \item The finite-regularity conclusions of
 \cite[Theorems~1.6 and~1.9]{JiangXie} hold without an a priori mass-growth
 bound.  In particular, $C^{n,\alpha}$ boundary data give $C^{n,\alpha}$
 regularity up to the boundary, while $C^{l,\alpha}$ data give the finite
 logarithmic expansion stated in Theorem~1.9.

 \item For smooth boundary data, \cite[Theorem~1.7]{JiangXie} gives a smooth
 dependence on $y'$, $y^n$, and $y^n\log y^n$; when $n$ is even, the logarithmic
 terms vanish and the graph is smooth up to the boundary.  In dimension
 $n=3$, the Willmore criterion in \cite[Corollary~1.8]{JiangXie} is upgraded
 in the same way.

 \item The convergence results \cite[Theorems~1.10 and~7.4]{JiangXie} also
 require no mass-growth hypothesis.  Thus analytic boundary data yield the
 convergent analytic representation in $y$ and $y^n\log y^n$, while smooth
 boundary data yield the absolutely and uniformly convergent expansion in
 $-(y^n)^n\log y^n$ described in Theorem~7.4.
\end{enumerate}

\begin{remark}
\label{rem:CM}
The doubling principle and the organization of the proof come from
Colding--Minicozzi \cite{ColdingMinicozzi}: confinement controls transverse
tilt, which in turn yields volume doubling.  The role of
\cref{thm:hyperbolic-comparison} is to supply the parallel fixed-scale statement in
hyperbolic space for the application above.  The required modification is
the corrected Green barrier, which compensates for the identity
$\nabla^2z_\alpha=z_\alpha g_{\Hyp}$ and for the curvature term in the radial
Hessian.  We do not claim that the doubling mechanism itself is new.
\end{remark}

The remainder of the paper is organized as follows.  In
\cref{sec:preliminaries} we fix the hyperbolic and geometric-measure-theory
conventions and isolate the input from \cite{JiangXie}.  In
\cref{sec:technical} we record the Euclidean input, adapt the corresponding
fixed-scale estimate to hyperbolic space, and prove the squashing estimates.  The
mass bounds are then proved in \cref{sec:proofs}. In Section \ref{sec:app}, we present further applications of Theorem \ref{thm:Lin-problem}.

\vspace{2em}
{\bf Disclosure on AI assistance.} The authors used AI-assisted tools, principally ChatGPT. The authors wrote and verified all theorem statements, proofs, and they take full responsibility for the contents of the paper.

\section{Preliminaries}\label{sec:preliminaries}

\subsection{Hyperbolic-space conventions}

In the upper-half-space model the hyperbolic metric and distance satisfy
\begin{equation}\label{eq:upper-distance}
 g_{\Hyp}=(x^{\mathfrak m})^{-2}|dx|^2,
 \qquad
 \cosh d_{\Hyp}(x,\widetilde x)
 =1+\frac{|x-\widetilde x|^2}
 {2x^{\mathfrak m}\widetilde x^{\mathfrak m}}.
\end{equation}
The upper-half-space and hyperboloid formulas used below are standard; see,
for example, \cite[Chapters~3--4]{Ratcliffe}.
We write $B_r(p)=B_{\Hyp^{\mathfrak m}}(p,r)$ unless another ambient
space is displayed.  An $n$-dimensional subspace $L\subset\Hyp^{\mathfrak m}$
is called \emph{totally geodesic} if every geodesic of the induced metric on
$L$ is an ambient geodesic.  Equivalently, its second fundamental form
vanishes; in the upper-half-space model, vertical Euclidean half-planes are
standard examples.  The open $\delta$-tube about $L$ is
\[
 \mathcal T_\delta(L)
 =\{x\in\Hyp^{\mathfrak m}:\dist_{\Hyp}(x,L)<\delta\}.
\]

\subsection{Currents and varifolds}\label{sec:GMT-prelim}

We use the terminology of Federer \cite{Federer} and Simon \cite{Simon}.
Throughout the paper, a rectifiable current is understood to have integer
multiplicity and locally finite mass.  Thus an $n$-current on a Riemannian
manifold $M$ has a representation
\[
 T=\cur{M_T,\tau_T,\theta_T},
\]
where $M_T$ is countably $n$-rectifiable, $\tau_T$ is an orientation, and
$\theta_T$ is integer valued.  Its mass measure and mass are denoted by
$\|T\|$ and $\Mass_M(T)$, respectively; $T\restr A$ denotes restriction to
a Borel set $A$, $f_\#T$ the pushforward when $f$ is Lipschitz and proper on
$\supp T$,
and $\supp T$ the closed support of $\|T\|$.  The current is locally integral
if both $T$ and $\partial T$ are integer rectifiable and have locally finite
mass.

We use the standard local notion of area minimization.  Namely, $T$ is
\emph{locally area minimizing} if
for every $U\Subset M$ and every integer-rectifiable $n$-current $W$ with
$\supp W\Subset U$ and $\partial W=0$,
\begin{equation}\label{eq:local-minimizer-definition}
 \Mass_U(T)\le \Mass_U(T+W),
 \qquad \Mass_U(S):=\|S\|(U).
\end{equation}
On a regular relatively compact domain this is equivalent to minimizing
$T\restr U$ among integer-rectifiable currents with the same boundary; see
\cite{Federer,Simon}.  

An integral $n$-varifold $V$ in $M$ is a Radon measure on the Grassmann
bundle $G_n(M)$ induced by a countably rectifiable set with
integer-valued density.  Its weight measure is $\mu_V=\|V\|$.  It is
\emph{stationary} in an open set $U$ if
\begin{equation}\label{eq:stationarity-definition}
 \delta V(X)=\int_{G_n(U)}\operatorname{div}_S X(x)\,dV(x,S)=0
\end{equation}
for every $X\in C_c^1(U;TM)$.  The varifold associated with
$T=\cur{M_T,\tau_T,\theta_T}$ is
\[
 V_T=\mathbf v(M_T,|\theta_T|);
\]
thus $\mu_{V_T}=\|T\|$, but the orientation is forgotten.  In particular,
the algebraic multiplicity of a projected current need not equal the density
of $V_T$.  If $T$ is locally area minimizing and
$(\partial T)\restr U=0$, then $V_T$ is a stationary integral varifold in
$U$; this is the standard first variation of mass
\cite[Sections~16--17]{Simon}.  No stationarity assertion is made across
$\supp(\partial T)$.

Whenever we say that a projection has algebraic multiplicity $Q$, we mean a
current identity of the form $f_\#T=Q\cur{D}$ on an oriented target domain
$D$.  Contributions from different sheets enter this identity with their
orientation signs.  The associated varifold, on the other hand, assigns
positive weight $|\theta_T|$ to every sheet.

We shall also use the following standard facts.  If $f$ is $1$-Lipschitz and
$T$ is locally integral, then for almost every $r$ the slice
$\langle T,f,r\rangle$ is integral and
\begin{equation}\label{eq:slicing-facts}
 \partial(T\restr\{f<r\})
 = (\partial T)\restr\{f<r\}+\langle T,f,r\rangle,
 \qquad
 \Mass(\langle T,f,r\rangle)
 \le \frac{d}{dr}\Mass(T\restr\{f<r\})
\end{equation}
for almost every $r$.   If $H:[0,1]\times M\to N$ is Lipschitz, one has
\begin{equation}\label{eq:homotopy-formula}
 \partial H_\#(\cur{[0,1]}\times T)
 =H(1,\cdot)_\#T-H(0,\cdot)_\#T
  -H_\#(\cur{[0,1]}\times\partial T).
\end{equation}
The corresponding product-current mass estimate bounds the swept mass by
the displacement times the appropriate spatial Jacobian.  Finally, the
Constancy Theorem says that a boundaryless top-dimensional integral current
on a connected oriented manifold is an integer multiple of its fundamental
current.  These statements are proved in
\cite[Sections~4.1 and~4.3]{Federer}; the Constancy Theorem is
\cite[4.1.7]{Federer}.

\subsection{Localization near the asymptotic boundary}
\label{sec:boundary-prelim}

Fix $Q_0\in\Gamma$ and translate so that $Q_0=(\mathbf0,0)$.  With the notation
of \cite[(1.3)--(1.7)]{JiangXie}, let
\begin{align}
 H^+&=\{(x',x^{\mathfrak m}):(x',0)\in T_{Q_0}\Gamma,
                    \ x^{\mathfrak m}>0\},\label{eq:Hplus}\\
 G_R&=\{(x',x^{\mathfrak m}):|x'|<R,
                    \ 0<x^{\mathfrak m}<R\},\label{eq:GR}\\
 B_R^+&=G_R\cap H^+,\label{eq:BRplus}
\end{align}
and let $\Proj$ denote Euclidean orthogonal projection onto $H^+$.  We say,
as in \cite[Section~2]{JiangXie}, that $T$ is \emph{asymptotic to} $\Gamma$
if $\partial T=0$ in $\Hyp^{\mathfrak m}$ and the Euclidean boundary of
$\supp T$ is $\Gamma$.

The following proposition isolates exactly the part of the earlier paper
used in \cref{sec:proofs}.  It contains no mass estimate.

\begin{proposition}\label{prop:boundary-localization}
Let $T$ and $\Gamma$ satisfy the hypotheses of
\cref{thm:remove-growth}, and let $\alpha\in(0,1]$ be the
Hölder exponent for which
$\Gamma\cap\overline{G_R}\in C^{1,\alpha}$.  There are $R_0\in(0,R)$,
$t_0>0$, and $C_\Gamma<\infty$ such that for every
$P\in\supp T\cap G_{R_0}$ with $t=x^{\mathfrak m}(P)<t_0$, the isometry
\begin{equation}\label{eq:boundary-rescaling}
 \Phi_P(x',x^{\mathfrak m})
 =t^{-1}(x'-x'(P),x^{\mathfrak m})
\end{equation}
has the following properties.  If $\widehat T_P=(\Phi_P)_\#T$ and
$q_P=\Phi_P(P)$, there are a vertical totally geodesic $n$-plane $L_P$, its
height-preserving projection $\Pi_P$, and $p_P=\Pi_P(q_P)$ such that
\begin{align}
 (\partial\widehat T_P)\restr B_2(q_P)&=0,
 \label{eq:rescaled-no-boundary}\\
 \dist_{\Hyp}(q_P,L_P)+
 \sup_{x\in\supp\widehat T_P\cap B_2(q_P)}\dist_{\Hyp}(x,L_P)
 &\le C_\Gamma t^\alpha,\label{eq:rescaled-tube}\\
 (\Pi_P)_\#(\widehat T_P\restr B_2(q_P))
 \restr D_{3/2}^{L_P}(p_P)
 &=\cur{D_{3/2}^{L_P}(p_P)}.\label{eq:rescaled-projection}
\end{align}
\end{proposition}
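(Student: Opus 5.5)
This proposition is a repackaging of the localization carried out in \cite[Sections~2--3]{JiangXie}. The plan is to run that localization with the mass-growth hypothesis \eqref{eq:old-mass-growth} never invoked. We first check that each of the three conclusions was obtained there from the $C^{1,\alpha}$ geometry of $\Gamma$, the asymptotic condition, and the projection identity \eqref{eq:boundary-projection} alone.

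\emph{Step one: the isometry and the absence of boundary.} The map $\Phi_P$ in \eqref{eq:boundary-rescaling} is a horizontal translation followed by a dilation by $t^{-1}$, so it is a hyperbolic isometry, and $q_P=\Phi_P(P)$ has height $1$. Since $\partial T=0$ in $\Hyp^{\mathfrak m}$, we get $\partial\widehat T_P=0$ everywhere, which gives \eqref{eq:rescaled-no-boundary} immediately. The hyperbolic ball $B_2(q_P)$ lies in a Euclidean region of size comparable to $1$ with heights in $[e^{-2},e^{2}]$. Pulled back by $\Phi_P$, this is a Euclidean region of size $O(t)$ around $P$ with heights comparable to $t$. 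Hence, for $R_0$ and $t_0$ small, its preimage lies inside $G_r$.

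\emph{Step two: confinement, estimate \eqref{eq:rescaled-tube}.} Let $\gamma$ be the point of $\Gamma$ nearest to $(x'(P),0)$. Take $L_P$ to be the image under $\Phi_P$ of the vertical half-plane over $T_\gamma\Gamma$. We will use the Euclidean barrier/confinement lemma of \cite{JiangXie}, proved by comparison with hyperbolic totally geodesic hemispheres and equidistant spheres that stay disjoint from $\Gamma$. It says that near $\Gamma$, $\supp T$ lies in the region whose Euclidean distance to $T_\gamma\Gamma\times\R_+$ at height $s$ is $\lesssim s^{1+\alpha}+|x'-\gamma|^{1+\alpha}$. Because $T$ is minimizing, this barrier argument uses only the maximum principle and no mass bound. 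After rescaling by $t^{-1}$, the Euclidean deviation in $B_2(q_P)$ is $O(t^\alpha)$ at heights comparable to $1$, so the hyperbolic distance is also $O(t^\alpha)$. The same bound holds for $q_P$ itself, since $P\in\supp T$. This gives \eqref{eq:rescaled-tube}.

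\emph{Step three: the projection identity \eqref{eq:rescaled-projection}, which I expect to be the main obstacle.} The issue is that \eqref{eq:boundary-projection} uses the fixed projection onto $H^+$, whereas we need $\Pi_P$, which depends on $P$, restricted to a ball. The plan has three parts.
\begin{enumerate}
\item Note that $\Phi_P$ commutes with height-preserving Euclidean projections. So $\Proj$ becomes, after rescaling, the projection onto the rescaled $H^+$, which makes an angle $O(R_0^\alpha)$ with $L_P$.
\item Join the two projections by a straight-line homotopy of linear maps and apply the homotopy formula \eqref{eq:homotopy-formula} to $\widehat T_P\restr B_2(q_P)$. Slicing at a good radius in $(3/2+\epsilon,2)$ and using \eqref{eq:slicing-facts} shows that the boundary of this restricted current projects outside $D_{3/2}^{L_P}(p_P)$. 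This uses the tube bound: the slice sphere lies within $O(t^\alpha)$ of $L_P$, so its projection stays near $\partial D_{s}^{L_P}$ with $s>3/2$.
\item Over $D_{3/2}^{L_P}(p_P)$ the projected current is then boundaryless and top-dimensional, so the Constancy Theorem \cite[4.1.7]{Federer} makes it an integer multiple of $\cur{D_{3/2}^{L_P}(p_P)}$.
\end{enumerate}
The integer is read off by comparison with \eqref{eq:boundary-projection}. The part of $\widehat T_P$ lying outside $B_2(q_P)$ but projecting into the disk is empty by confinement, because points of the tube projecting into $D_{3/2}$ stay in $B_2(q_P)$. Therefore the multiplicity is the one in \eqref{eq:boundary-projection}, namely $1$.

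The delicate point is that no mass bound is available. So every homotopy-mass estimate must be avoided: we use only supports, boundaries, and constancy, never mass. Finally, $C_\Gamma$, $R_0$ and $t_0$ depend only on the $C^{1,\alpha}$ norms of $\Gamma$ and on $r$.
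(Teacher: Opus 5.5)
Your proposal is correct in outline. Steps one and two agree with the paper, which also uses invariance of the boundary under $\Phi_P$ and the convex-hull estimate \cite[(2.8), (3.30)]{JiangXie}. Step three, however, takes a different route.

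The paper applies the Constancy Theorem to the global identity \eqref{eq:boundary-projection} to restrict it to smaller cylinders. It then rescales, asserts that the target contains $D_{7/4}^{L_P}(p_P)$, and uses confinement only to restrict before pushing forward. It does not distinguish the fixed direction of $H^+$ from the $P$-dependent direction of $L_P$, and defers that point to \cite[(3.15)]{JiangXie}.

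You instead apply Constancy on $L_P$ to $S=\widehat T_P\restr B_\rho(q_P)$ for a good radius $\rho\in(3/2,2)$. You then identify the resulting integer with the multiplicity in \eqref{eq:boundary-projection} by homotoping between the two projections. The paper's route is shorter. Yours costs one extra argument, but it is what lets $L_P$ be aligned with $T_\gamma\Gamma$, and the rate $t^\alpha$ needs that alignment. A plane parallel to $H^+$ gives only $O(R_0^\alpha+t^\alpha)$ in \eqref{eq:rescaled-tube}, which suffices for \cref{thm:local-mass} but not for the stated bound.

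When you write it out, make three things explicit. First, center both projections at $p_P$. The plane $\Phi_P(H^+)$ itself is offset from $q_P$ by the rescaled Euclidean distance $t^{-1}\dist(x'(P),T_{Q_0}\Gamma)$, which need not be small. Your phrase ``linear maps'' implicitly does this centering.

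Second, push the homotopy formula forward by $\Pi_P$. The $(n+1)$-dimensional term then vanishes inside the $n$-plane $L_P$. The swept term $\mathcal H_\#(\cur{[0,1]}\times\partial S)$ stays outside $D_{3/2}^{L_P}(p_P)$ because the tilt is $O(R_0^\alpha)$. This yields multiplicity $1$, since $\Pi_P$ restricted to the recentered plane is an orientation-preserving linear isomorphism.

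Third, excluding contributions from outside $B_2(q_P)$ requires the barrier estimate of your Step two on all of $\Phi_P(G_r)$. The bound \eqref{eq:rescaled-tube} does not suffice, since it only controls $B_2(q_P)$. The paper is equally brief on this point.
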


\begin{proof}
The first identity follows from $\partial T=0$ in $\Hyp^{\mathfrak m}$ and
invariance of boundary under pushforward.  Apply the convex-hull estimate
\cite[(2.8)]{JiangXie} on a slightly larger fixed normalized ball.  After
\eqref{eq:boundary-rescaling}, its quantitative form
\cite[(3.30)]{JiangXie} gives
\eqref{eq:rescaled-tube}.

We spell out the localization of the projection because it is independent of
any mass bound.  By \eqref{eq:boundary-projection} and the Constancy Theorem, the
multiplicity-one projection identity persists on every smaller cylinder
compactly contained in the cylinder where \eqref{eq:boundary-projection} holds.
After applying $\Phi_P$, its target contains
$D_{7/4}^{L_P}(p_P)$ and $B_2(q_P)$ lies in the normalized image of that
smaller cylinder once $R_0$ and $t_0$ are sufficiently small.  The same
convex-hull estimate shows that every point in the support of the normalized
restriction whose projection lies in $D_{3/2}^{L_P}(p_P)$ belongs to
$B_2(q_P)$: both its transverse displacement from $L_P$ and the displacement
of $q_P$ from $L_P$ are $O_\Gamma(t^\alpha)$, while the two radii leave a
fixed margin.  Testing the projected currents against forms compactly
supported in $D_{3/2}^{L_P}(p_P)$ therefore permits restriction before
pushforward and gives \eqref{eq:rescaled-projection}.  This is the
localized identity used in \cite[(3.15)]{JiangXie}.  None of these steps
invokes
\cite[(1.10)]{JiangXie}.
\end{proof}

\subsection{The hyperboloid model and transverse identities}
\label{sec:hyperboloid}

Realize hyperbolic space as
\begin{equation}\label{eq:hyperboloid}
 \Hyp^{\mathfrak m}=\bigl\{X\in\R^{\mathfrak m,1}:
 \langle X,X\rangle_L=-1,\ X_0>0\bigr\},
\end{equation}
where
\[
 \langle X,Y\rangle_L=-X_0Y_0+\sum_{j=1}^{\mathfrak m}X_jY_j.
\]
After an ambient isometry,
\begin{equation}\label{eq:L-model}
 L=\Hyp^{\mathfrak m}\cap\{X_{n+1}=\cdots=X_{n+k}=0\}
 \cong\Hyp^n.
\end{equation}
For $1\le\alpha\le k$, define
\[
 z_\alpha(X)=X_{n+\alpha},
 \qquad z=(z_1,\dots,z_k),
 \qquad \zeta=|z|^2.
\]
If $\rho(X)=\dist_{\Hyp}(X,L)$, the standard normal-coordinate formula in the
hyperboloid model gives
\begin{equation}\label{eq:zeta-rho}
 \zeta=\sinh^2\rho.
\end{equation}

Fix $p\in L$ and put $r(X)=\dist_{\Hyp}(p,X)$.  At
$X\in\Hyp^{\mathfrak m}$ and for an $n$-plane
$S\subset T_X\Hyp^{\mathfrak m}$, set
\begin{equation}\label{eq:E-a}
 E(X,S)=\sum_{\alpha=1}^k|P_S\nabla z_\alpha|^2,
 \qquad
 a(X,S)=|P_{S^\perp}\nabla r|^2.
\end{equation}
Here $P_S$ and $P_{S^\perp}$ denote orthogonal projection onto $S$ and its
orthogonal complement, respectively.  Thus $E$ is the transverse tilt and
$a$ is the radial normal tilt.  For a smooth function $u$, write
\[
L_Su=\tr_S\nabla^2u.
\]

The Gauss formula for the hyperboloid model gives, for
$\alpha,\beta\in\{1,\dots,k\}$,
\begin{align}
 \nabla z_\alpha&=e_{n+\alpha}+z_\alpha X, \label{eq:grad-z}\\
 \nabla^2z_\alpha&=z_\alpha g_{\Hyp}, \label{eq:hess-z}\\
 \langle\nabla z_\alpha,\nabla z_\beta\rangle
 &=\delta_{\alpha\beta}+z_\alpha z_\beta. \label{eq:gram-z}
\end{align}
These are standard coordinate identities; see
\cite[Chapter~3]{Ratcliffe}.  Differentiating $\zeta=\sum_\alpha z_\alpha^2$
and tracing over $S$ therefore gives
\begin{align}
 L_S\zeta&=2E+2n\zeta, \label{eq:Lzeta}\\
 |\nabla \zeta|^2&=4\zeta(1+\zeta). \label{eq:grad-zeta}
\end{align}

\section{Technical Lemmas}\label{sec:technical}

\subsection{The Euclidean confined-volume theorem}

We begin with the Euclidean theorem of Colding--Minicozzi, which is the source
of the doubling mechanism used here.  If
$\pi^\perp:\R^{n+k}\to\R^k$ is orthogonal projection, the set
$\{|\pi^\perp x|\le\delta R\}$ is a slab of width $2\delta R$ about the
$n$-plane $\R^n\times\{0\}$.  

\begin{theorem}[Colding--Minicozzi \cite{ColdingMinicozzi}]
\label{thm:CM-doubling}
There are $\delta_{\rm CM}=\delta_{\rm CM}(n)>0$ and
$C_{\rm CM}=C_{\rm CM}(n)<\infty$ with the following property.  Let $V$ be a
proper stationary integral $n$-varifold in $\R^{n+k}$.  If
\[
 B_{4R}(0)\cap\supp V
 \subset\{x\in\R^{n+k}:|\pi^\perp x|\le\delta_{\rm CM}R\},
\]
then
\[
 \mu_V(B_{2R}(0))\le C_{\rm CM}\mu_V(B_R(0)).
\]
The constants do not depend on $k$.
\end{theorem}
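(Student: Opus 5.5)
The statement just recorded is the Colding--Minicozzi confined-volume theorem. Here is a sketch of how we would prove it directly, following the mechanism described in \cref{rem:CM}. By scaling we take $R=1$, and we write $\delta=\delta_{\rm CM}$. We use the Euclidean analogues of \eqref{eq:E-a}, namely the transverse tilt
\[
 E(x,S)=\sum_{\alpha=1}^k|P_S\nabla z_\alpha|^2=|P_S\pi^\perp|^2,
\]
where $z=\pi^\perp x$ and $\zeta=|z|^2$. The Euclidean version of \eqref{eq:Lzeta} reads $L_S\zeta=2E$.

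\emph{Step 1: tilt control from confinement.} Test stationarity with $X=\phi^2\nabla\zeta/2=\phi^2 z$, where $\phi$ is a cutoff supported in $B_4$. This gives
\[
 \int\phi^2E\,d\mu_V=-\int\langle\nabla^S\phi^2,z\rangle\,dV\le 2\delta\int\phi|\nabla\phi|E^{1/2}\,d\mu_V,
\]
using that $|P_Sz|\le|z|E^{1/2}$ up to harmless normalization. Absorbing by Cauchy--Schwarz yields
\[
 \int\phi^2E\,d\mu_V\le C\delta^2\int|\nabla\phi|^2d\mu_V.
\]
Thus the sheets are almost parallel to $\R^n$ in an averaged sense. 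Note that this bound is homogeneous in $\mu_V$ and independent of $k$.

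\emph{Step 2: monotonicity-type identity along a flat radial function.} Test stationarity with $X=\psi(|\pi x|)\,\pi x$, where $\pi$ is the projection onto $\R^n$ and $\psi$ is a cutoff. Let $\sigma=|\pi x|$. Then
\[
 \operatorname{div}_S\pi x=n-|P_{S^\perp}\pi|^2,
\]
and $|P_{S^\perp}\pi|^2$ is controlled by $E$, since $\pi+\pi^\perp=\mathrm{Id}$ gives $|P_{S^\perp}\pi|^2=|P_S\pi^\perp|^2=E$. Because $|\pi x|\le|x|\le|\pi x|+\delta$ on the support, cylinders and balls are comparable up to $O(\delta)$. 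The identity therefore gives an almost-exact monotonicity of $\sigma^{-n}\mu_V(\{\sigma<s\})$ with error terms of the form $\int E\,|\nabla\psi|\sigma$. Step~1 bounds these errors by $C\delta^2\mu_V(B_4)$ or $C\delta^2\mu_V(B_{3})$.

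\emph{Step 3: closing the doubling.} Step 2 alone gives $\mu_V(B_2)\le C\mu_V(B_1)+C\delta^2\mu_V(B_4)$. The term at scale $4$ is the main obstacle, since it cannot be absorbed without a priori control. To deal with it we would choose a Green-type barrier instead of a plain cutoff: take $\psi$ built from $\sigma^{2-n}$ (or $\log\sigma$ when $n=2$), which is harmonic on $\R^n$. With this choice the leading term in $L_S\psi(\sigma)$ vanishes up to $E$-errors. We then combine it with $\zeta$, using the barrier $\psi(\sigma)-\Lambda\delta^{-2}\zeta$ or a similar combination, so that the $E$-terms enter with a favorable sign. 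Integrating $L_S$ of this barrier, cut off between radii $1$ and $2$, should give $\mu_V(B_2\setminus B_1)\le C\mu_V(B_1)$ directly. The errors would then be localized to the annulus and absorbed there, with no reference to $B_4$ beyond the confinement needed to make the barrier valid.

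If this direct absorption fails, the fallback is an iteration argument. We would show that the ratios $\mu_V(B_{2^{j+1}})/\mu_V(B_{2^j})$ cannot blow up at any scale inside $B_4$ without contradicting the mass identity at the adjacent scales. Smallness of $\delta$, chosen depending only on $n$, would close this iteration. Throughout, every constant depends only on $n$, because the only codimension-dependent quantities, $E$ and $\zeta$, enter through scalar identities.
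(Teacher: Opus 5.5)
\paragraph{Assessment.} Your Steps 1 and 2 are correct. For context, the paper does not reprove this theorem; it cites \cite[Theorem~0.5]{ColdingMinicozzi}, and its proof of \cref{thm:hyperbolic-comparison} follows that argument. Testing with $\phi^2 z$ gives $\int\phi^2E\le 4\delta^2\int|\nabla\phi|^2$. Because $|P_{S^\perp}\nabla\sigma|^2\le E$, your cylindrical monotonicity identity is two-sided, so
$\mu_V(\{\sigma<2\})\le 2^n\mu_V(\{\sigma<1\})+2^n\int_{\{1\le\sigma<2\}}E\,dV$.
This needs a cutoff in $|z|$ that is inactive on the slab, since $V$ is unconfined outside $B_4$. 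But the entire content of the theorem is Step~3, and there the proposal has a genuine gap. The fallback iteration cannot close it. Iterating Steps 1--2 over shells of width comparable to $\delta$ only shrinks the outer coefficient, giving something like $\mu_V(B_2)\le C\mu_V(B_1)+e^{-c/\delta}\mu_V(B_4)$. It never removes the $\mu_V(B_4)$ term, because confinement is given at the single scale $4$ and nothing a priori bounds $\mu_V(B_4)/\mu_V(B_2)$.

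\paragraph{Why the barrier step fails as written, and the missing idea.} If you integrate $L_Sh$ against a cutoff $\phi$ in $r$ or $\sigma$ between radii $1$ and $2$, the term $-\int\langle\nabla^S\phi,\nabla^Sh\rangle\,dV$ over the outer transition has the size of the outer-shell mass. That is exactly the quantity you must avoid. Your specific choice $\psi(\sigma)-\Lambda\delta^{-2}\zeta$ also has the wrong structure. It is superharmonic, so its outer flux has the unfavorable sign. Moreover, $\Lambda\delta^{-2}\zeta$ ranges over all of $[0,\Lambda]$ on the slab, so its level sets are not comparable to cylinders, and an ``inner'' level region need not lie in a small inner ball.

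The missing idea is the mechanism of \cref{lem:transverse-energy}:
\begin{itemize}
\item Take a \emph{subharmonic}, outward-decreasing corrected Green function $h=g+\Lambda\zeta$, with $g=\sigma^{2-n}$ (or $-\log\sigma$ when $n=2$) and $\Lambda=\Lambda(n)$ \emph{fixed}.
\item Then $L_Sg\ge -C\sigma^{-n}E$ gives $L_Sh\ge E$ on the annulus, while $0\le h-g\le\Lambda\delta^2$, so the level sets of $h$ stay close to those of $g$.
\item Test stationarity with $\eta(h)\nabla h$, i.e.\ cut off along the level sets of $h$ itself. The first variation is $\int\{\eta(h)L_Sh+\eta'(h)|\nabla^Sh|^2\}\,dV=0$.
\item Choose $\eta$ nondecreasing in $h$ at the outer end. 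Then the outer term is nonnegative and can be discarded.
\item Only the inner transition survives, and it is bounded by the mass of a small inner ball.
\end{itemize}
This yields $\int_{\rm annulus}E\,dV\le C\mu_V(B_{1/2})$, say, with no outer mass. Inserting it into your Step~2 inequality gives the doubling. Without this one-signed outer flux, Step~3 is an assertion rather than an argument.
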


This is \cite[Theorem~0.5]{ColdingMinicozzi}; its proof for $n\ge3$ is given
in \cite[Section~1]{ColdingMinicozzi}, while the surface case is treated in
\cite[Section~6.1]{ColdingMinicozzi}.  We use the following localized form
for \cref{thm:Lin-problem}.

\begin{corollary}[Localized Euclidean doubling]
\label{cor:CM-local}
Let $p\in L$, where $L\subset\R^{n+k}$ is an affine $n$-plane.  If $V$ is
an integral $n$-varifold stationary in $B_{4R}(p)$ and
\[
 \supp V\cap B_{4R}(p)
 \subset\{x:\dist(x,L)\le\delta_{\rm CM}R\},
\]
then
\begin{equation}\label{eq:CM-local-doubling}
 \mu_V(B_{2R}(p))\le C_{\rm CM}\mu_V(B_R(p)).
\end{equation}
\end{corollary}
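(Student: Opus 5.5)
The plan is to reduce to the normalized situation of \cref{thm:CM-doubling} and then check that its proof is local. Translating and rotating, we may assume $p=0$ and $L=\R^n\times\{0\}$, so that $\dist(x,L)=|\pi^\perp x|$. Rescaling by $x\mapsto x/R$ (which preserves stationarity and integrality and multiplies $\mu_V$ by $R^{-n}$ on both sides) we may also take $R=1$. The hypothesis then reads $\supp V\cap B_4(0)\subset\{|\pi^\perp x|\le\delta_{\rm CM}\}$, with $V$ stationary in $B_4(0)$. This is exactly the hypothesis of \cref{thm:CM-doubling}, except that $V$ is only assumed stationary and integral in $B_4(0)$ rather than proper and stationary in all of $\R^{n+k}$.

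There is no general way to extend a locally stationary varifold to a globally stationary one. So instead of extending $V$, I will verify that the proof of \cite[Theorem~0.5]{ColdingMinicozzi} only uses information inside $B_4(0)$. Its ingredients are the following.
\begin{enumerate}
\item The monotonicity formula for $\mu_V(B_s(x))/s^n$, for balls $B_s(x)\subset B_4(0)$.
\item A tilt estimate. One tests the first variation with $X=\phi^2\nabla|\pi^\perp x|^2$, where $\phi$ is a cutoff supported in $B_4(0)$. This bounds $\int\phi^2|\pi^\perp\circ P_S|^2\,dV$ by $C\delta_{\rm CM}^2\int|\nabla\phi|^2\,d\mu_V$. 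It is the Euclidean analogue of \eqref{eq:Lzeta}, with $L_S|\pi^\perp x|^2=2E$.
\item The doubling argument itself. One tests with radial fields $X=\eta(|x|)x$ for $\eta$ supported in $[0,2]$ or $[0,3]$. Small tilt forces $|\nabla^S r|$ close to $1$ on most of the mass, which gives an almost-Euclidean volume growth inequality between the radii $1$ and $2$.
\end{enumerate}
All of these are first-variation identities with compactly supported test fields, and the only place confinement enters is inside the support of those fields. Properness in the global statement serves only to make $\mu_V$ locally finite, which here follows from $V$ being a Radon measure on $G_n(B_4(0))$.

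The main obstacle is the bookkeeping in the surface case $n=2$ \cite[Section~6.1]{ColdingMinicozzi}. There the argument uses a logarithmic cutoff and a covering of an annulus, and one must confirm that every ball in the covering and every cutoff support lies inside $B_4(0)$. If some step uses a slightly larger region, I would run it at the scale $R'=R/2$ with cutoffs adapted to $B_{4R'}$, then cover $B_{2R}$ by boundedly many (depending only on $n$) balls $B_{R'}(x_i)$ with $x_i\in L$. Each such ball still satisfies the confinement in $B_{4R'}(x_i)\subset B_{4R}(p)$. Chaining the resulting doubling estimates along $L$ and using monotonicity at comparable centers then bounds $\mu_V(B_{2R}(p))$ by a constant multiple of $\mu_V(B_R(p))$. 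Only the numerical constant $C_{\rm CM}$ changes, and it remains independent of $k$. Scaling back gives \eqref{eq:CM-local-doubling}.
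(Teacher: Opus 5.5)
Your reduction (translate, rotate and dilate to $p=0$, $L=\R^n\times\{0\}$, $R=1$) matches the paper's proof. So does your decision to check that the proof of \cite[Theorem~0.5]{ColdingMinicozzi} is local rather than to extend $V$. The gap is in the argument you actually check for locality: the three ingredients you list do not prove doubling. To control the tilt on $B_2\setminus B_1$ with the Caccioppoli bound in (2), you need $\phi=1$ there. Then $\nabla\phi$ is supported outside $B_2$, and combining with the monotonicity identity gives only an inequality of the form
\[
 \mu_V(B_2)\le C\mu_V(B_1)+C\delta_{\rm CM}^2\,\mu_V(B_3).
\]
This cannot be closed without an a priori bound on the larger ball. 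It is the same defect as the squashing estimate \eqref{eq:E-squash-conclusion}, which is exactly why the paper needs the Colding--Minicozzi theorem at all. What makes their argument close is a tilt estimate on the annulus whose error lives only in the \emph{inner} ball. In the paper's hyperbolic rendition (\cref{lem:subharmonic-h,lem:transverse-energy}), this comes from the barrier $h=G_n(r)+\Lambda\zeta$ of \eqref{eq:h-def}, whose Euclidean prototype is built from $|x|^{2-n}$ (a logarithm when $n=2$) and $|\pi^\perp x|^2$. The barrier is tested against a cutoff $\eta(\bar r)$ in its own level function $\bar r$. The outer transition term $\eta'(\bar r)G_n'(\bar r)|P_S\nabla\bar r|^2$ then has a favorable sign and is discarded. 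The radial identity of \cref{lem:radial-energy} converts the resulting annular tilt bound into doubling.

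Locality has to be verified for that argument, and there it is not automatic. The set $\{\bar r\le c\}$ is a superlevel set of $h$ and contains every point where $\Lambda|\pi^\perp x|^2$ is large. So $\eta(\bar r)\nabla h$ is not supported in $B_4(0)$ by construction. One multiplies by a radial cutoff and uses the confinement inside $B_4(0)$ to show that $r-\bar r$ is small wherever $\eta(\bar r)$ or $\eta'(\bar r)$ is nonzero. Hence the radial cutoff produces no extra first-variation term; this is the role of $\chi_0$ in the proof of \cref{lem:transverse-energy}. Your sentence that confinement enters only inside the supports of the fields should be replaced by this check.

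Your fallback at scale $R'=R/2$ also fails as written. The hypothesis gives $\dist(\cdot,L)\le\delta_{\rm CM}R=2\delta_{\rm CM}R'$ in $B_{4R'}(x_i)$, which exceeds the threshold of \cref{thm:CM-doubling} at that scale. So $\delta_{\rm CM}$, and not only $C_{\rm CM}$, would have to be decreased. The chaining along $L$ is otherwise fine, and the change would be harmless for \cref{thm:Lin-problem}.
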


\begin{proof}
Translate $p$ to the origin, rotate the affine plane to
$\R^n\times\{0\}$, and dilate by $R^{-1}$.  Repeat the proof of
\cite[Theorem~0.5]{ColdingMinicozzi} at this scale.  For $n\ge3$, the
first-variation identities and the covering reduction in
\cite[Section~1]{ColdingMinicozzi} use cutoff fields compactly supported in
the scale-four region.  The proof therefore uses only the restriction of the
varifold to that region and stationarity there.  The surface argument in
\cite[Section~6.1]{ColdingMinicozzi} has the same localization, with $\log s$
in place of $s^{2-n}$.  Approximating the piecewise linear cutoffs by smooth
compactly supported cutoffs gives the assertion for a varifold defined and
stationary only in $B_4(0)$.  Scaling back proves
\eqref{eq:CM-local-doubling}.
\end{proof}

The second Euclidean ingredient is the algebraic-multiplicity-$Q$ form of the
squashing estimate.  Here $Q$ is the degree of the projected current, not a
prescribed density of the associated varifold.  Throughout this Euclidean
subsection, balls, distances, and masses are Euclidean.  For an oriented
affine $n$-plane $L$ and $p\in L$,
write
$D_{r,{\rm E}}^L(p)=B_r(p)\cap L$, and let $\mathbf p_L$ be orthogonal
projection onto the affine plane $L$.  The subscript ${\rm E}$ distinguishes
these disks from the hyperbolic disks $D_s^L(p)$ used elsewhere. The following result is essentially Lemma 3.1 in \cite{JiangXie}.

\begin{lemma}[Euclidean squashing with algebraic multiplicity]
\label{lem:Euclidean-squashing}
There is $c_E=c_E(n)<\infty$ with the following property.  Let $T$ be an
area-minimizing integral $n$-current in $\R^{n+k}$, let $p\in L$, and assume
\begin{align}
 (\partial T)\restr B_{2R}(p)&=0,\label{eq:E-squash-boundary}\\
 \supp T\cap B_{2R}(p)&\subset\{\dist(\cdot,L)\le\theta R\},
 \label{eq:E-squash-tube}\\
 (\mathbf p_L)_\#(T\restr B_{2R}(p))
 \restr D_{(2-2\theta)R,{\rm E}}^L(p)
 &=Q\cur{D_{(2-2\theta)R,{\rm E}}^L(p)}
 \label{eq:E-squash-projection}
\end{align}
for an integer $Q\ge1$ and $0<\theta<1/8$.  Then
\begin{equation}\label{eq:E-squash-conclusion}
 \Mass(T\restr B_R(p))
 \le \exp\!\left(-\frac{1}{c_E\theta}\right)
       \Mass(T\restr B_{2R}(p))+c_EQR^n.
\end{equation}
The constant is independent of $k$ and $Q$.
\end{lemma}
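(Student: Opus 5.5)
We prove the squashing estimate by comparison, the standard route for Lin's argument and \cite[Lemma~3.1]{JiangXie}. After translating, rotating and scaling we may take $p=0$, $L=\R^n\times\{0\}$ and $R=1$. Set
\[
F(r)=\Mass(T\restr B_r),\qquad 1\le r\le 2 .
\]
For a.e.\ $r$ the slice $\langle T,|\cdot|,r\rangle$ is integral, it lies in the slab $\{|\pi^\perp x|\le\theta\}$, and by \eqref{eq:slicing-facts} its mass is at most $F'(r)$. The plan is to compare $T\restr B_r$ with a competitor having the same boundary. The competitor is the cone-like current
\[
Q\cur{D_{r,{\rm E}}}+H_\#\bigl(\cur{[0,1]}\times\langle T,|\cdot|,r\rangle\bigr),
\qquad H(t,x)=(\mathbf p_Lx,\,t\,\pi^\perp x),
\]
which contracts the slice vertically onto $L$ through the slab. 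A small radial adjustment is needed here, because the squashed slice lies over the sphere of radius $\sqrt{r^2-|\pi^\perp x|^2}$ rather than $r$. We handle this by using a cylinder $\mathcal C_r\cap\{|\pi^\perp x|\le\theta\}$ in place of the ball: we define $F$ through $T\restr\{|\mathbf p_Lx|<r\}$, which differs from the ball version only by a shift of radius by $O(\theta)$.

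\textbf{Step 1: boundary matching.} We check that the boundary of the competitor equals $\partial(T\restr\mathcal C_r)$. By the homotopy formula \eqref{eq:homotopy-formula}, the boundary of the swept current is $\langle T,\cdot,r\rangle-\mathbf p_{L\#}\langle T,\cdot,r\rangle$. Moreover $\mathbf p_{L\#}\langle T,\cdot,r\rangle=\partial(\mathbf p_{L\#}(T\restr\mathcal C_r))=\partial(Q\cur{D_r})$, where the last equality uses \eqref{eq:E-squash-projection}. The condition $r\le 2-2\theta$ keeps the projected disk inside the region where that identity holds. Note that the identity is only algebraic, so sheets cancelling under projection are harmless: they are removed and replaced by $Q\cur{D_r}$.

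\textbf{Step 2: minimality and the differential inequality.} By minimality,
\[
F(r)\le Q\omega_nr^n+c\,\theta\,F'(r),
\]
since the swept mass is at most the vertical displacement $\theta$ times the slice mass. Setting $G(r)=F(r)-Q\omega_n2^n$ gives $G\le c\theta G'$ wherever $G>0$. Integrating from $1$ to $2-2\theta$ yields
\[
G(1)\le e^{-(1-2\theta)/(c\theta)}\,G(2)\le e^{-1/(c_E\theta)}\,F(2),
\]
which is \eqref{eq:E-squash-conclusion} with $c_EQ$ absorbing $\omega_n2^n$. If $G(1)\le0$, the conclusion is trivial.

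\textbf{Main obstacle.} The main difficulty is making the competitor's support lie where minimality can be applied, namely in $B_2$, while keeping the constant independent of $k$. The independence of $k$ holds because the Jacobian of $H$ involves only $1$-Lipschitz maps. The support condition holds because every point moved by $H$ stays inside the slab of width $\theta$. We also need care with the measure-zero set of bad radii when integrating the differential inequality. Since $F$ is monotone, $F'$ exists a.e.\ and $F(b)-F(a)\ge\int_a^bF'$, so integrating $(\log G)'\ge 1/(c\theta)$ on the set where $G>0$ is justified.
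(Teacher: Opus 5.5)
Your argument is correct, but it differs from the paper in one structural choice. You slice by $|\mathbf p_Lx|$ and track $\widetilde F(r)=\Mass(T\restr(\mathcal C_r\cap B_2))$, whereas the paper slices by $|x|$ and tracks ball masses.

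In the paper, the projected ball slice $(\mathbf p_L)_\#Z_r$ only lands in the annulus $\overline{D_{r,{\rm E}}^L}\setminus D_{r-2\theta,{\rm E}}^L$. A second homotopy is therefore needed: the radial retraction $K_r$ onto $\partial D_{r-2\theta,{\rm E}}^L$. One must also observe that its rank-$(n-1)$ terminal map annihilates the top-dimensional current $C_r$, since only this identifies the retracted slice with $Q\cur{\partial D_{r-2\theta,{\rm E}}^L}$.

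In your version, $\mathcal C_r=\mathbf p_L^{-1}(D_{r,{\rm E}}^L)$, so pushforward commutes with the restriction. Then \eqref{eq:E-squash-projection} gives $(\mathbf p_L)_\#(T\restr(\mathcal C_r\cap B_2))=Q\cur{D_{r,{\rm E}}^L}$ exactly for $r\le 2-2\theta$, hence $(\mathbf p_L)_\#Z_r=Q\cur{\partial D_{r,{\rm E}}^L}$. The vertical squash alone then closes the competitor, and no retraction or rank argument is needed. This is the more elementary route.

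The paper's choice keeps the ball masses $\Mass(T\restr B_r)$, which are the quantities entering the doubling estimate, throughout. It is also the template reused in \cref{lem:squashing}, where one slices by $d_q$. Your price is only bookkeeping:
\begin{itemize}
\item The comparisons $\Mass(T\restr B_1)\le\widetilde F(1)$ and $\widetilde F(2-2\theta)\le\Mass(T\restr B_2)$ are immediate.
\item The slab confinement ensures that $\partial(T\restr(\mathcal C_r\cap B_2))$ is exactly the cylindrical slice and that the competitor is compactly supported in $B_2$. This holds because $\supp T\cap\mathcal C_r\cap B_2\subset\overline{B_{\sqrt{r^2+\theta^2}}}\subset B_2$ for $r\le 2-2\theta$.
\end{itemize}

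Two small corrections. First, define your monotone quantity explicitly with the truncation $\mathcal C_r\cap B_2$, not through $T\restr\{|\mathbf p_Lx|<r\}$. The untruncated cylinder sees parts of $T$ outside $B_2$, about which the hypotheses say nothing. Second, integrating over $[1,2-2\theta]$ gives $G(1)\le e^{-(1-2\theta)/(c\theta)}G(2-2\theta)$, not a bound in terms of $G(2)$. Then $1-2\theta\ge 3/4$ yields the stated exponent after enlarging $c$.
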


\begin{proof}
Translation and dilation reduce the proof to $p=0$ and $R=1$.  Put
$F(r)=\Mass(T\restr B_r)$ for $1\le r\le2$.  For almost every such $r$,
slicing gives the cycle $Z_r=\partial(T\restr B_r)$ with
$\Mass(Z_r)\le F'(r)$.  Set $s=r-2\theta$ and abbreviate
$D_{u,{\rm E}}^L=D_{u,{\rm E}}^L(0)$.

Let
\[
 \mathcal H_\tau(x)=(1-\tau)x+\tau\mathbf p_Lx,
 \qquad
 A_r=\mathcal H_\#\bigl(\cur{[0,1]}\times Z_r\bigr).
\]
This straight-line homotopy has speed at most $\theta$ and spatial
differential at most one.  The homotopy formula and the product-current mass
estimate give
\[
 \partial A_r=(\mathbf p_L)_\#Z_r-Z_r,
 \qquad
 \Mass(A_r)\le\theta\Mass(Z_r).
\]
Moreover, every point of
$\supp(T\restr B_2)$ projecting into $D_{s,{\rm E}}^L$ lies in $B_r$, by
\eqref{eq:E-squash-tube}.  Thus \eqref{eq:E-squash-projection} localizes to
\[
 (\mathbf p_L)_\#(T\restr B_r)\restr D_{s,{\rm E}}^L
 =Q\cur{D_{s,{\rm E}}^L}.
\]
By locality of the boundary inside the open disk $D_{s,{\rm E}}^L$,
$(\mathbf p_L)_\#Z_r$ has no support there; its support is contained in
$\overline{D_{r,{\rm E}}^L}\setminus D_{s,{\rm E}}^L$.  Consequently, the
top-dimensional current
\[
 C_r=(\mathbf p_L)_\#(T\restr B_r)-Q\cur{D_{s,{\rm E}}^L}
\]
is supported in $\overline{D_{r,{\rm E}}^L}\setminus D_{s,{\rm E}}^L$ and has
boundary $(\mathbf p_L)_\#Z_r-Q\cur{\partial D_{s,{\rm E}}^L}$.  Radial
retraction of this annulus onto $\partial D_{s,{\rm E}}^L$ annihilates $C_r$,
since the terminal map has
rank at most $n-1$.  It follows that the same retraction sends
$(\mathbf p_L)_\#Z_r$ to $Q\cur{\partial D_{s,{\rm E}}^L}$.  Explicitly, on
the annulus set
\[
 \mathcal R_\tau(y)=
 \left((1-\tau)+\tau\frac{s}{|y|}\right)y,
 \qquad 0\le\tau\le1,
\]
and set
\[
 K_r=\mathcal R_\#\bigl(
       \cur{[0,1]}\times(\mathbf p_L)_\#Z_r\bigr).
\]
Its displacement is at most $2\theta$ and its spatial differential is at
most one.  Thus
\[
 \partial K_r=Q\cur{\partial D_{s,{\rm E}}^L}-(\mathbf p_L)_\#Z_r,
 \qquad
 \Mass(K_r)\le2\theta\Mass(Z_r).
\]
Consequently,
\[
 R_r=-A_r-K_r+Q\cur{D_{s,{\rm E}}^L}
\]
satisfies
\[
 \partial R_r=Z_r,
 \qquad
 \Mass(R_r)\le c_E\theta\Mass(Z_r)+c_EQ.
\]
To use this filling as a competitor, set
\[
 W_r=R_r-(T\restr B_r).
\]
Then $\partial W_r=0$ and $W_r$ is compactly supported in $B_2$.  Choose a
bounded open set $U$ containing this support.  For the almost-everywhere
radii under consideration, $\|T\|(\partial B_r)=0$.  Area minimality and the
triangle inequality give
\[
 \Mass_U(T)
 \le \Mass_U(T+W_r)
 \le \Mass_U(T\restr(U\setminus B_r))+\Mass(R_r).
\]
Subtracting the first term on the right gives $F(r)\le\Mass(R_r)$.  Hence
$F(r)\le c_E\theta F'(r)+c_EQ$.  Applying the integrating factor to
$F-c_EQ$ on $[1,2]$ yields \eqref{eq:E-squash-conclusion}.
\end{proof}

For the hyperbolic application we follow the fixed-scale scheme of
Colding--Minicozzi and replace its Euclidean tilt and radial identities by the
curvature-corrected estimates in the following four subsections.

\subsection{A codimension-free tilt comparison}\label{sec:tilt}

Set
\[
 N_\alpha=\nabla z_\alpha,
 \qquad
 \mathcal N_X=\operatorname{span}\{N_1,\dots,N_k\},
 \qquad
 \mathcal H_X=\mathcal N_X^\perp.
\]
By \eqref{eq:gram-z}, $\dim\mathcal N_X=k$ and $\dim\mathcal H_X=n$.

\begin{lemma}[Radial normal tilt versus transverse tilt]\label{lem:tilt-comparison}
For every $r_0>0$ and every $X$ with $r(X)\ge r_0$,
\begin{equation}\label{eq:tilt-comparison}
 a(X,S)\le2E(X,S)+2\coth^2(r_0)\zeta(X).
\end{equation}
In particular, on each fixed annulus $r_0\le r\le r_1$,
\[
 a\le C(r_0)(E+\zeta),
\]
with a constant independent of $k$.
\end{lemma}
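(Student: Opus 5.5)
The plan is to split $\nabla r$ along the orthogonal decomposition $T_X\Hyp^{\mathfrak m}=\mathcal H_X\oplus\mathcal N_X$ introduced just before the lemma. Write $\nabla r=h+\nu$ with $h\in\mathcal H_X$ and $\nu\in\mathcal N_X$. Then
\[
 a(X,S)=|P_{S^\perp}h+P_{S^\perp}\nu|^2\le 2|P_{S^\perp}h|^2+2|\nu|^2 .
\]
It therefore suffices to prove two bounds: $|P_{S^\perp}h|^2\le E(X,S)$, which is a pure linear-algebra tilt bound, and $|\nu|^2\le\coth^2(r_0)\,\zeta$, which is a geometric computation of the $\mathcal N_X$-component of $\nabla r$. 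Nothing in either step refers to $k$, so the constant comes out codimension-free.

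\emph{Tilt step.} Let $G=(\langle N_\alpha,N_\beta\rangle)$ be the Gram matrix of the $N_\alpha$. By \eqref{eq:gram-z}, $G=I+zz^{T}$, so $G\ge I$. An orthonormal basis of $\mathcal N_X$ is $\nu_j=\sum_\alpha N_\alpha (G^{-1/2})_{\alpha j}$. Let $M_{\alpha\beta}=\langle P_SN_\alpha,P_SN_\beta\rangle\ge0$. Then
\[
 \sum_j|P_S\nu_j|^2=\tr(G^{-1}M)\le\tr M=E,
\]
using $G^{-1}\le I$ together with $M\ge0$. Since $S$ and $\mathcal H_X$ are both $n$-dimensional, the standard identity for equidimensional subspaces gives
\[
 \|P_{S^\perp}P_{\mathcal H_X}\|=\|P_{\mathcal H_X^\perp}P_S\|=\|P_{\mathcal N_X}P_S\|.
\]
We also have
\[
 \|P_{\mathcal N_X}P_S\|^2\le\tr(P_SP_{\mathcal N_X}P_S)=\sum_j|P_S\nu_j|^2\le E .
\]
As $|h|\le|\nabla r|=1$, this yields $|P_{S^\perp}h|^2\le E$. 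The only point needing care here is the equal-norm identity for equidimensional subspaces, which is standard (e.g.\ via principal angles).

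\emph{Radial step.} To compute $\langle\nabla r,N_\alpha\rangle$, parametrize the unit-speed geodesic from $p$ in the hyperboloid model as $\gamma(t)=\cosh t\,p+\sinh t\,v$, where $v$ is the initial unit tangent. Since $p\in L$ has $z(p)=0$, we get $z_\alpha(\gamma(t))=\sinh t\,v_{n+\alpha}$. Differentiating at $t=r(X)$ gives
\[
 \langle\nabla r,N_\alpha\rangle=\cosh r\,v_{n+\alpha}=\coth(r)\,z_\alpha(X).
\]
With $c=\coth(r)\,z$, we then have
\[
 |\nu|^2=c^{T}G^{-1}c=\coth^2(r)\,z^{T}(I+zz^{T})^{-1}z=\coth^2(r)\,\frac{\zeta}{1+\zeta}\le\coth^2(r_0)\,\zeta .
\]
The last inequality holds because $\coth$ is decreasing and $r\ge r_0$. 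Combining the two steps gives \eqref{eq:tilt-comparison}.

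The annulus statement follows immediately with $C(r_0)=2\max(1,\coth^2 r_0)$. I expect the only real obstacle to be bookkeeping the non-orthonormality of the $N_\alpha$. This is handled entirely by $G\ge I$: it costs nothing in the tilt step, and it actually helps in the radial step.
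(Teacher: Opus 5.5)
Your proof is correct and is essentially the paper's argument. You use the same splitting of $\nabla r$ along $\mathcal H_X\oplus\mathcal N_X$, the same Gram-matrix computation $|P_{\mathcal N_X}\nabla r|^2=\coth^2 r\,\zeta/(1+\zeta)$, and the same use of $\dim S=\dim\mathcal H_X$ to bound the $\mathcal H_X$-component by $E$; the differences are cosmetic (you use $\|P_{S^\perp}P_{\mathcal H_X}\|=\|P_{\mathcal N_X}P_S\|$ where the paper uses $\tr(P_{\mathcal H_X}P_{S^\perp})=\tr(P_{\mathcal N_X}P_S)$, and you get $\langle\nabla r,N_\alpha\rangle$ from the geodesic parametrization rather than from $\nabla\cosh r$).
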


\begin{proof}
Define the positive semidefinite endomorphism
\[
 A=\sum_{\alpha=1}^kN_\alpha\otimes N_\alpha.
\]
The nonzero eigenvalues of $A$ are the eigenvalues of the Gram matrix
$I+zz^T$: they are $1$ with multiplicity $k-1$ and $1+\zeta$ with
multiplicity one.  Hence
\begin{equation}\label{eq:A-PN}
 A\ge P_{\mathcal N_X}.
\end{equation}
It follows that
\begin{equation}\label{eq:E-trace}
 E=\tr(P_SA)\ge\tr(P_SP_{\mathcal N_X}).
\end{equation}

Let $f=\cosh r=-\langle X,p\rangle_L$.  Since $p\in L$,
\[
 \nabla f=-p+fX,
 \qquad
 \langle\nabla f,\nabla z_\alpha\rangle=fz_\alpha.
\]
Dividing by $|\nabla f|=\sinh r$ gives
\begin{equation}\label{eq:r-Nalpha}
 \langle\nabla r,N_\alpha\rangle=\coth r\,z_\alpha.
\end{equation}
Therefore
\begin{align}
 |P_{\mathcal N_X}\nabla r|^2
 &=\coth^2r\,z^T(I+zz^T)^{-1}z \notag\\
 &=\coth^2r\,\frac{\zeta}{1+\zeta}
 \le\coth^2(r_0)\zeta. \label{eq:PN-r}
\end{align}

Put $v=P_{\mathcal H_X}\nabla r$.  Since $\dim\mathcal H_X=\dim S=n$,
\begin{equation}\label{eq:trace-complement}
 \tr(P_{\mathcal H_X}P_{S^\perp})
 =\tr(P_{\mathcal N_X}P_S).
\end{equation}
The largest eigenvalue of the positive semidefinite map
$P_{\mathcal H_X}P_{S^\perp}P_{\mathcal H_X}$ is bounded by its trace.
Since $|v|\le1$, \eqref{eq:E-trace} and
\eqref{eq:trace-complement} imply
\begin{equation}\label{eq:PH-r}
 |P_{S^\perp}v|^2
 \le\tr(P_{\mathcal H_X}P_{S^\perp})
 =\tr(P_{\mathcal N_X}P_S)
 \le E.
\end{equation}
Combining \eqref{eq:PN-r}, \eqref{eq:PH-r}, and
$|u+v|^2\le2|u|^2+2|v|^2$ proves \eqref{eq:tilt-comparison}.
\end{proof}

\subsection{The curvature-corrected Green-function barrier}\label{sec:barrier}

For $n\ge2$, define
\begin{equation}\label{eq:G-def}
 G_n(r)=\int_r^\infty(\sinh t)^{1-n}\,dt,
 \qquad
 \mathfrak q_n(r)=-G_n'(r)=(\sinh r)^{1-n}>0,
\end{equation}
and
\begin{equation}\label{eq:A-n}
 A_n(r)=n\mathfrak q_n(r)\coth r.
\end{equation}
Then
\begin{equation}\label{eq:G-ode}
 G_n''+(n-1)\coth r\,G_n'=0.
\end{equation}

The standard distance-Hessian formula in hyperbolic space is
\[
 \nabla^2r=\coth r\,(g_{\Hyp}-dr\otimes dr);
\]
see \cite[Section~5.5]{Petersen}.  Thus, if
$b=|P_S\nabla r|^2=1-a$, then \eqref{eq:G-ode} gives
\begin{equation}\label{eq:LG}
 \begin{split}
 L_SG_n(r)
 &=G_n''b+G_n'\coth r\,(n-b)\\
 &=-A_n(r)a.
 \end{split}
\end{equation}

Fix
\begin{equation}\label{eq:I-annulus}
 I=\left[\frac1{16},\frac32\right].
\end{equation}
By \cref{lem:tilt-comparison}, there is $C_a=C_a(n)$ such that
\begin{equation}\label{eq:a-Ezeta-I}
 a\le C_a(E+\zeta)
 \qquad\text{on }I.
\end{equation}
Let $A_*=\max_I A_n$ and choose
\begin{equation}\label{eq:Lambda-choice}
 \Lambda\ge\frac{A_*C_a+1}{2}.
\end{equation}
Define
\begin{equation}\label{eq:h-def}
 h=G_n(r)+\Lambda\zeta.
\end{equation}

\begin{lemma}[Subharmonic corrected barrier]\label{lem:subharmonic-h}
On the annulus \eqref{eq:I-annulus},
\begin{equation}\label{eq:Lh}
 L_Sh\ge E+\zeta.
\end{equation}
\end{lemma}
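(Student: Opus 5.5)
This is a direct computation: by linearity of $L_S$ we split $L_Sh=L_SG_n(r)+\Lambda L_S\zeta$ and evaluate each term with the identities already recorded. For the Green term, \eqref{eq:LG} gives $L_SG_n(r)=-A_n(r)\,a(X,S)$. For the transverse term, \eqref{eq:Lzeta} gives $\Lambda L_S\zeta=2\Lambda E+2n\Lambda\zeta$. Hence, pointwise for every $X$ with $r(X)\in I$ and every $n$-plane $S\subset T_X\Hyp^{\mathfrak m}$,
\[
 L_Sh=-A_n(r)\,a+2\Lambda E+2n\Lambda\zeta .
\]

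The only negative contribution is $-A_n(r)a$, and on $I$ we bound it in two steps. Since $A_n>0$ and $A_n\le A_*=\max_IA_n$ on $I$, we have $A_n(r)a\le A_*a$. Then \eqref{eq:a-Ezeta-I}, which is \cref{lem:tilt-comparison} with $r_0=1/16$ on the compact annulus $I$, gives $a\le C_a(E+\zeta)$. Therefore
\[
 L_Sh\ge(2\Lambda-A_*C_a)E+(2n\Lambda-A_*C_a)\zeta .
\]

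By the choice \eqref{eq:Lambda-choice}, $2\Lambda-A_*C_a\ge1$. Since $n\ge1$, also $2n\Lambda-A_*C_a\ge2\Lambda-A_*C_a\ge1$. Because $E\ge0$ and $\zeta\ge0$, this gives \eqref{eq:Lh}.

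None of these steps is a real obstacle. The one point to check is that $A_*$ is finite, since $A_n$ blows up like $r^{-n}$ as $r\to0$; this is exactly why the annulus $I$ is bounded away from $r=0$. It is also worth noting that $A_*$ and $C_a$ depend only on $n$ (the constant in \cref{lem:tilt-comparison} is $2\coth^2(1/16)$, with $\zeta$ bounded on the region considered if needed), so $\Lambda$ and the estimate are independent of $k$.
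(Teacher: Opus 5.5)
Your proposal is correct and is exactly the paper's argument: expand $L_Sh$ via \eqref{eq:LG} and \eqref{eq:Lzeta}, bound $A_n(r)a\le A_*C_a(E+\zeta)$ on $I$ using \eqref{eq:a-Ezeta-I}, and conclude with the choice \eqref{eq:Lambda-choice}. Your caveat about bounding $\zeta$ is unnecessary, since $\coth^2(1/16)\ge1$ already gives $a\le 2\coth^2(1/16)(E+\zeta)$ from \cref{lem:tilt-comparison}.
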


\begin{proof}
By \eqref{eq:Lzeta}, \eqref{eq:LG}, and \eqref{eq:a-Ezeta-I},
\begin{align*}
 L_Sh
 &=-A_n(r)a+2\Lambda E+2n\Lambda\zeta\\
 &\ge(2\Lambda-A_*C_a)E+(2n\Lambda-A_*C_a)\zeta
 \ge E+\zeta.
\end{align*}
\end{proof}

Since $G_n:(0,\infty)\to(0,\infty)$ is strictly decreasing, define $\bar r$ by
\begin{equation}\label{eq:rbar-def}
 G_n(\bar r)=h=G_n(r)+\Lambda\zeta.
\end{equation}
Thus $\bar r\le r$.  Away from $p$, the inverse-function theorem also
shows that $\bar r$ is smooth; this is the only region on which its
derivatives will be used.

\begin{lemma}[Comparison of $r$ and $\bar r$]\label{lem:r-rbar}
There exist $\eps_0=\eps_0(n)>0$, $\sigma=\sigma(n)>0$, and $C=C(n)$
such that, whenever $\rho\le\eps_0$ and
\[
 \frac1{16}\le\bar r\le\frac{17}{12},
\]
one has
\begin{equation}\label{eq:r-rbar-est}
 r\le\frac32-\sigma,
 \qquad
 0\le r-\bar r\le\frac1{24},
 \qquad
 |\nabla\bar r|\le C.
\end{equation}
\end{lemma}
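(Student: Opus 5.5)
The plan is to work directly from the defining relation \eqref{eq:rbar-def}, $G_n(\bar r)=G_n(r)+\Lambda\zeta$, together with $\zeta=\sinh^2\rho\le\sinh^2\eps_0$ from \eqref{eq:zeta-rho}. Since $G_n$ is strictly decreasing, we already have $\bar r\le r$. The whole lemma then becomes a statement about the one-variable function $G_n$ on a compact interval, where the perturbation $\Lambda\zeta$ is uniformly small. Here $\Lambda=\Lambda(n)$ is fixed by \eqref{eq:Lambda-choice}.

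\emph{Step 1: $r$ stays in a compact interval.} Because $\bar r\ge 1/16$, we have $G_n(r)\le G_n(\bar r)\le G_n(1/16)$, and also $r\ge\bar r\ge1/16$. For the upper bound, suppose $r\ge 3/2$. Then $G_n(\bar r)\le G_n(3/2)+\Lambda\sinh^2\eps_0$. Choosing $\eps_0$ so small that $\Lambda\sinh^2\eps_0<\tfrac12\bigl(G_n(17/12)-G_n(3/2)\bigr)$ forces $\bar r>17/12$, a contradiction. Hence $r<3/2$, and $r$ lies in the compact interval $[1/16,3/2]$.

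\emph{Step 2: the quantitative gap.} By the mean value theorem,
\[
 \Lambda\zeta=G_n(\bar r)-G_n(r)=\mathfrak q_n(\xi)(r-\bar r)
\]
for some $\xi\in[\bar r,r]\subset[1/16,3/2]$. On that interval $\mathfrak q_n\ge m_n:=(\sinh(3/2))^{1-n}>0$, so
\[
 0\le r-\bar r\le \Lambda\sinh^2\eps_0/m_n .
\]
Shrinking $\eps_0$ makes this at most $1/24$, and also at most $\tfrac32-\tfrac{17}{12}-\sigma$ with $\sigma=1/24$. This yields $r\le\bar r+1/24\le 17/12+1/24=3/2-1/24$, so $\sigma=1/24$ works. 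This refines Step 1, which is then only needed to guarantee $\xi$ lies in the interval where $m_n$ bounds $\mathfrak q_n$ from below.

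\emph{Step 3: the gradient bound.} Here $r\ge1/16$, so we are away from $p$ and $\bar r$ is smooth. Differentiating \eqref{eq:rbar-def} gives
\[
 -\mathfrak q_n(\bar r)\nabla\bar r=-\mathfrak q_n(r)\nabla r+\Lambda\nabla\zeta .
\]
Using $|\nabla r|=1$ and \eqref{eq:grad-zeta}, $|\nabla\zeta|=2\sqrt{\zeta(1+\zeta)}\le 2\sinh\eps_0\cosh\eps_0\le1$. It follows that
\[
 |\nabla\bar r|\le \frac{\mathfrak q_n(r)+\Lambda}{\mathfrak q_n(\bar r)}\le \frac{\mathfrak q_n(1/16)+\Lambda}{\mathfrak q_n(17/12)}=:C(n).
\]

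The only delicate point is the ordering of choices. We need to bound $r$ a priori before applying the mean value lower bound on $\mathfrak q_n$, since $\mathfrak q_n$ decays as its argument grows. The contradiction argument in Step 1 handles this. All constants depend only on $n$ through $\Lambda$ and $G_n$, and not on $k$.
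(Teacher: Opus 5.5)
Your proof is correct and follows essentially the paper's route: shrink $\eps_0$ so that $\Lambda\sinh^2\eps_0$ lies below both half the gap $G_n(17/12)-G_n(3/2)$ and $\tfrac1{24}\min_I\mathfrak q_n$, deduce $r<3/2$ from the monotonicity of $G_n$, bound $r-\bar r$ using the lower bound on $\mathfrak q_n$ over $I$, and differentiate \eqref{eq:rbar-def} to get the gradient bound. The only cosmetic difference is in $\sigma$: you take the explicit value $\sigma=1/24$ from $r\le\bar r+1/24\le 35/24$, while the paper gets $\sigma$ from the strict inequality $G_n(r)\ge G_n(17/12)-d_*/2>G_n(3/2)$.
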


\begin{proof}
Set $\mathfrak q_*=\min_I\mathfrak q_n>0$ and
\[
 d_*=G_n\!\left(\frac{17}{12}\right)-G_n\!\left(\frac32\right)>0.
\]
Choose $\eps_0$ so small that
\begin{equation}\label{eq:eps-choice}
 \Lambda\sinh^2\eps_0
 <\min\left\{\frac{d_*}{2},\frac{\mathfrak q_*}{24}\right\}.
\end{equation}
By \eqref{eq:zeta-rho}, $\zeta\le\sinh^2\eps_0$.  From
\eqref{eq:rbar-def},
\[
 G_n(r)=G_n(\bar r)-\Lambda\zeta
 \ge G_n\!\left(\frac{17}{12}\right)-\frac{d_*}{2}
 >G_n\!\left(\frac32\right).
\]
Because $G_n$ is strictly decreasing, there is a fixed
$\sigma=\sigma(n)>0$ for which $r\le3/2-\sigma$.

The interval between $\bar r$ and $r$ lies in $I$, and hence
\[
 \Lambda\zeta=G_n(\bar r)-G_n(r)
 =\int_{\bar r}^{r}\mathfrak q_n(s)\,ds
 \ge \mathfrak q_*(r-\bar r).
\]
The second term in \eqref{eq:eps-choice} gives $r-\bar r\le1/24$.

Differentiating \eqref{eq:rbar-def},
\begin{equation}\label{eq:grad-rbar}
 G_n'(\bar r)\nabla\bar r
 =G_n'(r)\nabla r+\Lambda\nabla\zeta.
\end{equation}
Both $r$ and $\bar r$ lie in the compact interval $I$.  The function
$\mathfrak q_n$ is
bounded above and below there, while \eqref{eq:grad-zeta} and
$\zeta\le\sinh^2\eps_0$ bound $|\nabla\zeta|$.  Equation
\eqref{eq:grad-rbar} proves the gradient estimate.
\end{proof}

\begin{lemma}\label{lem:target-annulus}
After decreasing $\eps_0(n)$ if necessary, if $\rho\le\eps_0$ and
\[
 \frac14\le r\le\frac54,
\]
then
\begin{equation}\label{eq:rbar-target}
 \frac5{24}\le\bar r\le\frac54.
\end{equation}
\end{lemma}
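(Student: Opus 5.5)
The lemma gives two-sided bounds on $\bar r$, and I would prove each bound directly from the defining relation \eqref{eq:rbar-def}, $G_n(\bar r)=G_n(r)+\Lambda\zeta$. Two facts drive everything: $\zeta=\sinh^2\rho\le\sinh^2\eps_0$ by \eqref{eq:zeta-rho}, and $G_n$ is strictly decreasing with derivative $-\mathfrak q_n$.

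The upper bound is immediate. Since $\Lambda\zeta\ge0$, we have $G_n(\bar r)\ge G_n(r)$. Monotonicity then gives $\bar r\le r\le 5/4$.

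For the lower bound, I use $r\ge1/4$ to get $G_n(r)\le G_n(1/4)$, and hence
\[
 G_n(\bar r)\le G_n\!\left(\tfrac14\right)+\Lambda\sinh^2\eps_0 .
\]
Now set
\[
 d_{**}=G_n\!\left(\tfrac5{24}\right)-G_n\!\left(\tfrac14\right).
\]
This is positive, since $5/24<1/4$ and $G_n$ is strictly decreasing. It also depends only on $n$.

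Next I shrink $\eps_0(n)$ so that $\Lambda\sinh^2\eps_0<d_{**}$, keeping \eqref{eq:eps-choice} in force. This is legitimate because $\Lambda$ depends only on $n$, through \eqref{eq:Lambda-choice}. With this choice, $G_n(\bar r)<G_n(5/24)$, and monotonicity gives $\bar r>5/24$.

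A quantitative version of the same step is available if wanted. The interval $[5/24,1/4]$ lies in $I$, so $d_{**}\ge\mathfrak q_*/24$ with $\mathfrak q_*=\min_I\mathfrak q_n$. The second condition in \eqref{eq:eps-choice} already gives $\Lambda\sinh^2\eps_0<\mathfrak q_*/24$, so no further decrease of $\eps_0$ is actually needed.

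The only point needing care is that $\bar r$ is well defined. We need $G_n(r)+\Lambda\zeta$ to lie in the range $(0,\infty)$ of $G_n$. This holds because $G_n$ maps $(0,\infty)$ onto $(0,\infty)$: $G_n(s)\to\infty$ as $s\to0$ when $n\ge2$, and $G_n(s)\to0$ as $s\to\infty$. I do not expect any real obstacle here; the lemma is a direct monotonicity argument.
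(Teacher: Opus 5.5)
Your proof is correct. It differs from the paper's only in how the lower bound is organized.

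The paper argues in two steps. It first rules out $\bar r<1/16$ by comparing $\Lambda\zeta$ with $\int_{1/16}^{1/4}\mathfrak q_n\ge\frac{3}{16}\mathfrak q_*$. That places $\bar r$ in the range $[1/16,17/12]$ where \cref{lem:r-rbar} applies, and the paper then reads off $\bar r\ge r-\frac1{24}\ge\frac5{24}$ from that lemma.

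You instead compare directly on the fixed interval $[5/24,1/4]\subset I$, getting $G_n(\bar r)\le G_n(1/4)+\Lambda\sinh^2\eps_0<G_n(1/4)+\mathfrak q_*/24\le G_n(5/24)$ in one stroke. So you never need \cref{lem:r-rbar} or its hypothesis $\bar r\ge1/16$.

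Your version is shorter and self-contained. It also makes explicit that \eqref{eq:eps-choice} already suffices, so no further decrease of $\eps_0$ is needed. The same holds for the paper's argument, since $\frac{3}{16}\mathfrak q_*>\frac1{24}\mathfrak q_*$. The paper's version, in exchange, reuses the comparison $r-\bar r\le\frac1{24}$ it has just proved rather than redoing that integral estimate on a new interval.

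Your check that $\bar r$ is well defined, via $G_n\bigl((0,\infty)\bigr)=(0,\infty)$ for $n\ge2$, supplies the surjectivity that the paper uses only implicitly when it defines $\bar r$.
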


\begin{proof}
The upper bound follows from $\bar r\le r$.  Suppose that $\bar r<1/16$.  Then
\begin{align*}
 \Lambda\zeta
 &=G_n(\bar r)-G_n(r)\\
 &>G_n\!\left(\frac1{16}\right)-G_n\!\left(\frac14\right)
 =\int_{1/16}^{1/4}\mathfrak q_n(s)\,ds
 \ge\frac3{16}\mathfrak q_*,
\end{align*}
contrary to \eqref{eq:eps-choice}.  Hence $\bar r\ge1/16$, so \cref{lem:r-rbar} applies and gives
\[
 \bar r\ge r-\frac1{24}\ge\frac14-\frac1{24}=\frac5{24}.
\]
\end{proof}

\subsection{The transverse-energy estimate}\label{sec:transverse}

Let $V$ satisfy the hypotheses of \cref{thm:hyperbolic-comparison}, with
$\eps_D\le\eps_0$.  Choose a smooth function
$\eta:[0,\infty)\to[0,1]$ such that
\begin{equation}\label{eq:eta}
\begin{aligned}
 &\eta=0 &&\text{on }[0,1/16],
 &\qquad \eta'\ge0 &&\text{on }[1/16,1/8],\\
 &\eta=1 &&\text{on }[1/8,4/3],
 &\qquad \eta'\le0 &&\text{on }[4/3,17/12],\\
 &\eta=0 &&\text{on }[17/12,\infty),
 &\qquad |\eta'|&\le C(n).
\end{aligned}
\end{equation}

\begin{lemma}[Transverse energy on the target annulus]\label{lem:transverse-energy}
Under the assumptions of \cref{thm:hyperbolic-comparison},
\begin{equation}\label{eq:transverse-energy}
 \int_{B_{5/4}(p)\setminus B_{1/4}(p)}(E+\zeta)\,dV
 \le C(n)\,\mu_V\bigl(B_{1/4}(p)\bigr).
\end{equation}
\end{lemma}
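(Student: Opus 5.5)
The plan is to test the stationarity of $V$ against the vector field $X=\eta(\bar r)\,\nabla h$ and to integrate the inequality of \cref{lem:subharmonic-h} against $\eta(\bar r)$. The field is admissible: on $\supp\eta(\bar r)$ we have $1/16\le\bar r\le17/12$. By \cref{lem:r-rbar} this forces $r\le 3/2-\sigma$, so the field is compactly supported in $B_{3/2}(p)$, and it is $C^1$ because $\bar r$ is smooth away from $p$. Also $\bar r\ge 1/16$ gives $r\ge\bar r\ge1/16$, so on the support both $r$ and $\bar r$ lie in $I$ and \eqref{eq:Lh} applies. Since $\operatorname{div}_S(\eta\nabla h)=\eta L_Sh+\eta'(\bar r)\langle P_S\nabla\bar r,\nabla h\rangle$, stationarity gives
\[
 \int\eta(\bar r)(E+\zeta)\,dV\le\int\eta(\bar r)L_Sh\,dV
 =-\int\eta'(\bar r)\langle P_S\nabla\bar r,P_S\nabla h\rangle\,dV .
\]

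The key observation is that $h=G_n(\bar r)$ by \eqref{eq:rbar-def}. Hence $\nabla h=G_n'(\bar r)\nabla\bar r=-\mathfrak q_n(\bar r)\nabla\bar r$, and the right-hand side equals
\[
 \int\eta'(\bar r)\,\mathfrak q_n(\bar r)\,|P_S\nabla\bar r|^2\,dV .
\]
On the outer transition region $\bar r\in[4/3,17/12]$ we have $\eta'\le0$, so that contribution is nonpositive and can be dropped. This sign is exactly why the barrier is built as a function of the level variable $\bar r$ rather than $r$: it removes the outer boundary term, which we could not otherwise control. What remains is the inner region $\bar r\in[1/16,1/8]$. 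There $\eta'\le C(n)$, $\mathfrak q_n(\bar r)\le C(n)$, and $|\nabla\bar r|\le C$ by \cref{lem:r-rbar}. Moreover $r\le\bar r+1/24\le 1/8+1/24=1/6<1/4$, so this region lies inside $B_{1/4}(p)$ and its contribution is bounded by $C(n)\mu_V(B_{1/4}(p))$.

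Finally, the left-hand side dominates the target integral. By \cref{lem:target-annulus}, every point with $1/4\le r\le5/4$ satisfies $5/24\le\bar r\le5/4$, so $\eta(\bar r)=1$ there. Since $E+\zeta\ge0$, we conclude
\[
 \int_{B_{5/4}\setminus B_{1/4}}(E+\zeta)\,dV\le\int\eta(\bar r)(E+\zeta)\,dV\le C(n)\,\mu_V(B_{1/4}(p)) ,
\]
which is \eqref{eq:transverse-energy}. All constants depend only on $n$ through $\Lambda$, $\eps_0$ and the bounds on $\mathfrak q_n$ over $I$, and they are independent of $k$ because \cref{lem:tilt-comparison} is codimension-free.

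The step needing most care is the justification of the test field. One must check three things: that $\eta(\bar r)\nabla h$ is $C^1$ and compactly supported in $B_{3/2}(p)$, using $r\le3/2-\sigma$ together with $\rho\le\eps_D\le\eps_0$ on $\supp V$; that the identity $\nabla h=-\mathfrak q_n(\bar r)\nabla\bar r$ holds pointwise on the support; and that \eqref{eq:Lh} is only applied on $\supp V$, where confinement gives $\zeta\le\sinh^2\eps_0$. Apart from this bookkeeping the argument is a direct first-variation computation.
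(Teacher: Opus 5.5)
Your argument is the paper's proof: the same field $\eta(\bar r)\nabla h$, the same identity $\nabla h=G_n'(\bar r)\nabla\bar r$ turning the boundary term into $\int\eta'(\bar r)\mathfrak q_n(\bar r)|P_S\nabla\bar r|^2\,dV$, the same sign argument on the outer transition, the same inner bound via $r\le 1/6$, and the same use of \cref{lem:target-annulus}.

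\textbf{The gap.} One step is false as written: ``by \cref{lem:r-rbar} this forces $r\le 3/2-\sigma$, so the field is compactly supported in $B_{3/2}(p)$.'' \Cref{lem:r-rbar} assumes $\rho\le\eps_0$. That holds on $\supp V$ by confinement, but not on the ambient ball. Off the tube $\zeta$ is not small. Since $G_n(\bar r)=G_n(r)+\Lambda\zeta$, one can have $\bar r\in[1/16,17/12]$, even $\bar r=1$, at points with $r$ arbitrarily close to $3/2$ or beyond it. So $\eta(\bar r)\nabla h$ need not vanish near $\partial B_{3/2}(p)$ and is not in $C^1_c(B_{3/2}(p))$. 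Stationarity, which is assumed only in $B_{3/2}(p)$, therefore cannot be applied to it directly. What your reasoning actually shows is weaker: the support of the field meets $\supp V$ in a compact subset of $\{r\le 3/2-\sigma\}$.

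\textbf{The repair.} Do what the paper does and use $X_1=\chi_0(r)\eta(\bar r)\nabla h$. Here $\chi_0=1$ on $[0,3/2-\sigma/2]$ and $\supp\chi_0\subset[0,3/2)$, so $X_1$ is genuinely admissible. At points of $\supp V$ where $\eta(\bar r)$ or $\eta'(\bar r)$ is nonzero, \cref{lem:r-rbar} applies and gives $r\le 3/2-\sigma$. Hence $\chi_0(r)=1$ and $\nabla(\chi_0(r))=0$ there. At the remaining points of $\supp V$ every term of $\operatorname{div}_S X_1$ vanishes. The first variation of $X_1$ is therefore exactly the expression you computed, and the rest of your proof goes through unchanged.
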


\begin{proof}
Let $\sigma$ be as in \cref{lem:r-rbar}.  Choose a smooth radial
cutoff $\chi_0:[0,\infty)\to[0,1]$ such that
\[
 \chi_0=1\quad\text{on }[0,3/2-\sigma/2],
 \qquad
 \supp\chi_0\subset[0,3/2).
\]
Consider
\begin{equation}\label{eq:X1}
 X_1=\chi_0(r)\eta(\bar r)\nabla h.
\end{equation}
Although $G_n(r)$ is singular at $p$, one has $\bar r\to0$ as $r\to0$
(recall that $\zeta(p)=0$), so the factor $\eta(\bar r)$ vanishes in a
neighborhood of $p$.  Hence $X_1$ extends smoothly by zero across $p$ and
is compactly supported.  Whenever $\eta(\bar r)$ or
$\eta'(\bar r)$ is nonzero, \cref{lem:r-rbar} gives
$r\le3/2-\sigma$.  Hence $\chi_0(r)=1$ and
$\nabla(\chi_0(r))=0$ at every point of
$\supp V$ where the cutoff contributes.  Thus the radial cutoff creates no
additional first-variation term.

Since $h=G_n(\bar r)$,
\[
 \nabla h=G_n'(\bar r)\nabla\bar r.
\]
Stationarity gives
\begin{equation}\label{eq:first-var-h}
 0=\int\left\{
 \eta(\bar r)L_Sh
 +\eta'(\bar r)G_n'(\bar r)|P_S\nabla\bar r|^2
 \right\}\,dV.
\end{equation}

By \cref{lem:target-annulus}, on $B_{5/4}(p)\setminus B_{1/4}(p)$ one has
\[
 \frac5{24}\le\bar r\le\frac54<\frac43.
\]
Therefore $\eta(\bar r)=1$ on the target annulus.  Moreover, wherever
$\eta(\bar r)>0$, \cref{lem:r-rbar} places $r$ in the annulus $I$, so
$L_Sh\ge E+\zeta\ge0$ there by \cref{lem:subharmonic-h}.  Consequently,
\begin{equation}\label{eq:target-Lh}
 \int_{B_{5/4}\setminus B_{1/4}}(E+\zeta)\,dV
 \le\int\eta(\bar r)L_Sh\,dV.
\end{equation}

Split the $\eta'$ term in \eqref{eq:first-var-h} into the inner and outer
transition regions.  On the outer transition, $\eta'\le0$ and $G_n'<0$, so
\[
 \eta'G_n'|P_S\nabla\bar r|^2\ge0.
\]
This term has the favorable sign and may be discarded.  On the inner
transition, $1/16<\bar r<1/8$, and \cref{lem:r-rbar} gives
\[
 r\le\frac18+\frac1{24}=\frac16<\frac14.
\]
Moreover, $|\nabla\bar r|\le C(n)$, and $G_n'(\bar r)$ is uniformly bounded
on the transition interval.  Thus
\begin{align*}
 \int\eta(\bar r)L_Sh\,dV
 &\le-\int_{\{1/16<\bar r<1/8\}}
 \eta'(\bar r)G_n'(\bar r)|P_S\nabla\bar r|^2\,dV\\
 &\le C(n)\mu_V\bigl(B_{1/4}(p)\bigr).
\end{align*}
Combining this with \eqref{eq:target-Lh} proves \eqref{eq:transverse-energy}.
\end{proof}

\subsection{The radial estimate}\label{sec:radial}

Choose the piecewise linear cutoff
\begin{equation}\label{eq:psi}
 \psi(r)=
 \begin{cases}
 0,&0\le r\le1/8,\\
 8r-1,&1/8<r<1/4,\\
 5/4-r,&1/4\le r<5/4,\\
 0,&r\ge5/4.
 \end{cases}
\end{equation}
It may be approximated by smooth cutoffs; the identities below pass to the limit.

\begin{lemma}[Tangential radial energy]\label{lem:radial-energy}
Under the assumptions of \cref{thm:hyperbolic-comparison},
\begin{equation}\label{eq:radial-energy}
 \int_{B_{5/4}(p)\setminus B_{1/4}(p)}|P_S\nabla r|^2\,dV
 \le C(n)\,\mu_V\bigl(B_{1/4}(p)\bigr).
\end{equation}
\end{lemma}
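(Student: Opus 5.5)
We test stationarity against the radial field $X_2=\psi(r)\,r\nabla r$, or more simply $X_2=\psi(r)\nabla r$ (after smoothing $\psi$ as indicated after \eqref{eq:psi}). Stationarity is legitimate because $\supp\psi\subset[1/8,5/4]$, so $X_2$ vanishes near $p$ and is compactly supported in $B_{3/2}(p)$.

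Using the distance-Hessian formula $\nabla^2 r=\coth r\,(g_{\Hyp}-dr\otimes dr)$, the tangential divergence is
\[
 \operatorname{div}_S X_2=\psi'(r)\,|P_S\nabla r|^2+\psi(r)\coth r\,\bigl(n-|P_S\nabla r|^2\bigr).
\]
We split the integral $0=\int\operatorname{div}_SX_2\,dV$ by the pieces of $\psi$:
\begin{itemize}
 \item on $1/8<r<1/4$, $\psi'=8$ and $\psi\le1$;
 \item on $1/4\le r<5/4$, $\psi'=-1$.
\end{itemize}
Moving the outer piece to one side gives
\[
 \int_{B_{5/4}\setminus B_{1/4}}|P_S\nabla r|^2\,dV
 = 8\int_{B_{1/4}\setminus B_{1/8}}|P_S\nabla r|^2\,dV+\int\psi\coth r\,\bigl(n-|P_S\nabla r|^2\bigr)\,dV .
\]
The first term on the right is at most $8\mu_V(B_{1/4}(p))$.

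\textbf{The obstacle.} The last term is positive and of size $\mu_V(B_{5/4})$, which is exactly the quantity we are trying to control. It cannot be absorbed directly. This is where the transverse-energy estimate must enter. Write $n-|P_S\nabla r|^2=(n-1)+a$, using $a=1-|P_S\nabla r|^2$. The summand involving $a$ is harmless: by \cref{lem:tilt-comparison} on the annulus, $a\le C(E+\zeta)$, so \cref{lem:transverse-energy} bounds it by $C\mu_V(B_{1/4})$. The remaining $(n-1)$ term is not small, so the naive field does not work. I would instead choose the radial field so that its divergence is nearly isotropic. The plan is to take
\[
 X_2=\psi(r)\,\frac{\sinh r}{\cosh r}\cdot(\text{suitable profile})\,\nabla r,
\]
or equivalently to use $\nabla f$ with $f=\cosh r$. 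For this choice $\nabla^2 f=f\,g_{\Hyp}$, so
\[
 \operatorname{div}_S\bigl(\phi(r)\nabla f\bigr)=\phi'(r)\sinh r\,|P_S\nabla r|^2+n\,\phi(r)\cosh r .
\]
This removes the tilt dependence from the zeroth-order term.

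Even with this choice the zeroth-order term remains positive. The genuine mechanism, following Colding--Minicozzi, is a combination in which $|P_S\nabla r|^2$ is replaced by $1-a$. The divergence then splits into a sum of two parts:
\begin{itemize}
 \item a monotonicity-type piece, whose weight $\Theta(r)=n\phi\cosh r+\phi'\sinh r$ is made to vanish by choosing $\phi$ to solve the ODE $\Theta=0$ on $[1/4,5/4]$, namely $\phi=(\sinh r)^{-n}$ times a constant;
 \item a remainder $-\phi'\sinh r\,a$.
\end{itemize}
Concretely, let $\phi$ be this solution, cut off inside $r<1/4$ and cut off again near $5/4$. On the main annulus the identity then reads
\[
 0=\int\bigl(\Theta(r)-\phi'\sinh r\,a\bigr)\,dV+\text{(cutoff terms)}.
\]
Here $\Theta=0$ on the main annulus, so only the tilt term survives there. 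That term is bounded by \cref{lem:tilt-comparison} and \cref{lem:transverse-energy}. The inner cutoff terms are bounded by $C\mu_V(B_{1/4})$.

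\textbf{Which cutoff carries the estimate.} The outer transition near $5/4$ has the favourable sign, and it is this term that produces $\int|P_S\nabla r|^2$ with a positive coefficient. I expect this step to be the main obstacle. To finish, I would take an outer profile that decreases linearly, mirroring the slope $-1$ piece of $\psi$. Its coefficient is then $-\phi'\ge c>0$ on $[1/4,5/4]$. Integrating this profile over $r$ (a layer-cake argument) yields $c\int_{B_{5/4}\setminus B_{1/4}}|P_S\nabla r|^2$, bounded by $C\mu_V(B_{1/4})$ plus the transverse-energy term. All constants depend only on $n$, since every lemma used is codimension-free.
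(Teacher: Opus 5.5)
Your proposal is correct and is essentially the paper's proof. The field you arrive at by solving $\Theta=0$ is $\psi(r)(\sinh r)^{-n}\nabla\cosh r=\psi(r)(\sinh r)^{1-n}\nabla r=-\psi(r)\nabla G_n(r)$, which is the paper's test field up to sign. The resulting identity $0=\int\psi A_n a\,dV+\int\psi'\mathfrak q_n|P_S\nabla r|^2\,dV$ is then bounded exactly as in the paper: the inner terms by $C\mu_V(B_{1/4})$, the tilt term via \cref{lem:tilt-comparison} and \cref{lem:transverse-energy}, and the outer term using $\mathfrak q_n\ge c(n)$ on $[1/4,5/4]$.

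Two small points of presentation. The layer-cake step is unnecessary, since $\psi'=-1$ on the whole outer annulus. Also, the coefficient you call $-\phi'$ is really $-\psi'\mathfrak q_n=\mathfrak q_n$.
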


\begin{proof}
Use the compactly supported vector field
\[
 X_2=\psi(r)\nabla G_n(r).
\]
Since $\psi$ vanishes near $r=0$, this field extends smoothly by zero across
$p$.
Write $b=|P_S\nabla r|^2=1-a$.  By \eqref{eq:LG} and stationarity,
\begin{equation}\label{eq:first-var-X2}
 0=-\int\psi(r)A_n(r)a\,dV
   -\int\psi'(r)\mathfrak q_n(r)b\,dV.
\end{equation}
Since $\psi'=8$ on $(1/8,1/4)$ and $\psi'=-1$ on $(1/4,5/4)$, \eqref{eq:first-var-X2} becomes
\begin{equation}\label{eq:radial-balance}
 \int_{B_{5/4}\setminus B_{1/4}}\mathfrak q_n(r)b\,dV
 =8\int_{B_{1/4}\setminus B_{1/8}}\mathfrak q_n(r)b\,dV
  +\int\psi(r)A_n(r)a\,dV.
\end{equation}
The first term on the right is bounded by $C(n)\mu_V(B_{1/4})$.  For the
last term, use $a\le1$ on $B_{1/4}\setminus B_{1/8}$ and
\cref{lem:tilt-comparison} on $B_{5/4}\setminus B_{1/4}$ to obtain
\begin{align*}
 \int\psi A_na\,dV
 &\le C(n)\mu_V(B_{1/4})
 +C(n)\int_{B_{5/4}\setminus B_{1/4}}(E+\zeta)\,dV\\
 &\le C(n)\mu_V(B_{1/4}),
\end{align*}
where the final inequality is \cref{lem:transverse-energy}.  Since
$\mathfrak q_n$ has a positive lower bound on $[1/4,5/4]$,
\eqref{eq:radial-balance} proves \eqref{eq:radial-energy}.
\end{proof}

\subsection{Localized squashing}\label{sec:squashing}

We now localize the squashing construction of
\cite[Lemmas~3.1 and~3.2]{JiangXie}.  We use the vertical projection
$\Pi_L$ in \eqref{eq:vertical-plane-projection}, exactly as in the
upper-half-space formulation of \cite{JiangXie}.  The next elementary
observation records the geometric estimates needed for the homotopy.

\begin{lemma}\label{lem:vertical-projection}
Let $x=(w,z,y)$ and let
$\rho(x)=\dist_{\Hyp}(x,L)$.  Then
\begin{equation}\label{eq:vertical-distance-formulas}
 \sinh\rho(x)=\frac{|z|}{y},
 \qquad
 d_{\Hyp}(x,\Pi_Lx)
 =2\operatorname{arsinh}\!\left(\frac{|z|}{2y}\right).
\end{equation}
The map $\Pi_L$ is $1$-Lipschitz.  Moreover, if $\rho(x)\le\delta\le1$,
then
\begin{equation}\label{eq:projection-displacement}
 d_{\Hyp}(x,\Pi_Lx)\le2\delta.
\end{equation}
The horizontal homotopy
\begin{equation}\label{eq:horizontal-homotopy}
 \mathcal H_\tau(w,z,y)=(w,(1-\tau)z,y),
 \qquad 0\le\tau\le1,
\end{equation}
has spatial differential of norm at most one and hyperbolic speed at most
$2\delta$ on the $\delta$-tube about $L$.
\end{lemma}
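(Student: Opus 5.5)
The plan is to compute in the upper-half-space model. Every formula will be reduced either to the distance identity \eqref{eq:upper-distance} or to a direct computation of the metric $y^{-2}(dw^2+dz^2+dy^2)$.

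\emph{Distance to $L$.} The map $\Pi_L$ is a Euclidean isometry in the $(w,y)$ variables and fixes $L=\{z=0\}$ pointwise. For $x=(w,z,y)$ and $\tilde x=(\tilde w,0,\tilde y)\in L$, \eqref{eq:upper-distance} gives
\[
 \cosh d=1+\frac{|w-\tilde w|^2+|z|^2+(y-\tilde y)^2}{2y\tilde y}.
\]
The right side is minimized at $\tilde w=w$. It then remains to minimize $\bigl(|z|^2+(y-\tilde y)^2\bigr)/(2y\tilde y)$ over $\tilde y>0$. Calculus gives $\tilde y=\sqrt{y^2+|z|^2}$. Hence $\cosh\rho=\sqrt{1+|z|^2/y^2}$, that is, $\sinh\rho=|z|/y$.

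\emph{Displacement.} Taking $\tilde x=\Pi_Lx=(w,0,y)$ directly,
\[
 \cosh d=1+\frac{|z|^2}{2y^2}.
\]
Since $\cosh d-1=2\sinh^2(d/2)$, this gives $\sinh(d/2)=|z|/(2y)$, which is the second formula in \eqref{eq:vertical-distance-formulas}.

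\emph{Lipschitz bound and homotopy.} The differential of $\Pi_L$ at $x$ is $(dw,dz,dy)\mapsto(dw,0,dy)$, and the base point keeps the same height $y$. So the hyperbolic norm of a vector $v$, namely $y^{-1}|v|_{\rm Euc}$, cannot increase. Thus $|d\Pi_L|\le1$ pointwise. Since the half-space is geodesically convex, integrating along geodesics shows $\Pi_L$ is $1$-Lipschitz. The same argument applies to $\mathcal H_\tau$ for each fixed $\tau$, because $d\mathcal H_\tau=(dw,(1-\tau)dz,dy)$ and $\mathcal H_\tau$ preserves $y$. For the speed, $\partial_\tau\mathcal H_\tau=(0,-z,0)$ at a point of height $y$, so the hyperbolic speed is
\[
 \frac{|z|}{y}=\sinh\rho(x)\le\sinh\delta.
\]
Since $\sinh\delta\le2\delta$ for $\delta\le1$, the speed is at most $2\delta$ on the $\delta$-tube.

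\emph{The bound \eqref{eq:projection-displacement}.} From the displacement formula,
\[
 d=2\operatorname{arsinh}\bigl(\tfrac12\sinh\rho\bigr)\le\sinh\rho,
\]
using $\operatorname{arsinh}u\le u$. Then $\sinh\rho\le\sinh\delta\le2\delta$ for $\delta\le1$. Alternatively, one can integrate the speed bound along the path $\tau\mapsto\mathcal H_\tau(x)$, which joins $x$ to $\Pi_Lx$.

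\emph{Main obstacle.} There is no serious obstacle here. The only step needing care is the minimization over $\tilde y$ in the distance-to-$L$ formula. One can also sidestep that computation with a consistency check: in the hyperboloid model \eqref{eq:zeta-rho} gives $\sinh\rho=|z_{\rm hyp}|$, and under the standard isometry the hyperboloid coordinate $z_{\rm hyp}$ corresponds to $z/y$ in the upper-half-space model.
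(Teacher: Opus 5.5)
Your proposal is correct and follows essentially the same route as the paper. You minimize the upper-half-space distance formula over $L$, evaluate it at $(w,0,y)$, bound the differentials of $\Pi_L$ and $\mathcal H_\tau$ using that they preserve $y$, and finish with $2\operatorname{arsinh}(u/2)\le u$ and $\sinh\rho\le2\rho$ on $[0,1]$. You also write out the minimization over $\tilde y$ and the step from the pointwise differential bound to the global $1$-Lipschitz property, both of which the paper leaves implicit.
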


\begin{proof}
The standard upper-half-space distance formula \eqref{eq:upper-distance}
(see also \cite[Chapter~4]{Ratcliffe}), minimized over points of $L$, gives
the first identity in
\eqref{eq:vertical-distance-formulas}.  Applying the same formula to $x$
 and $(w,0,y)$ gives the second.  The differential of $\Pi_L$ is Euclidean
orthogonal projection and preserves $y$, so its norm for the conformal
metric $y^{-2}g_{\mathrm E}$ is at most one.  Finally,
\[
 2\operatorname{arsinh}\!\left(\frac{|z|}{2y}\right)
 \le\frac{|z|}{y}=\sinh\rho\le2\rho
\]
for $0\le\rho\le1$.  The assertions about \eqref{eq:horizontal-homotopy}
follow directly from its differential and from
 $|\partial_\tau\mathcal H|_{g_{\Hyp}}=|z|/y$.
\end{proof}

\begin{lemma}[Localized squashing with algebraic multiplicity]
\label{lem:squashing}
Under the assumptions of \cref{thm:local-mass}, set
\[
 F(r)=\Mass_{\Hyp^{\mathfrak m}}\bigl(T\restr B_r(q)\bigr),
 \qquad r\in[1/2,1].
\]
There is $c_s=c_s(n)<\infty$ such that, for $\delta$ sufficiently small,
\begin{equation}\label{eq:squashing-integrated}
 F(1/2)
 \le\exp\!\left(-\frac{1}{2c_s\delta}\right)F(1)+c_sQ.
\end{equation}
\end{lemma}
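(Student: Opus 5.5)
We mimic the proof of \cref{lem:Euclidean-squashing}, replacing Euclidean balls by hyperbolic balls about $q$, the orthogonal projection by $\Pi_L$, and the straight-line homotopy by the horizontal homotopy $\mathcal H_\tau$ of \cref{lem:vertical-projection}. The goal is the differential inequality
\[
 F(r)\le c\,\delta\,F'(r)+c\,Q
 \qquad\text{for a.e. } r\in[1/2,1],
\]
which integrates to \eqref{eq:squashing-integrated}.

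\emph{Slice and homotopy.} For a.e.\ $r\in[1/2,1]$, slicing $T$ by the $1$-Lipschitz function $d_{\Hyp}(q,\cdot)$ gives, via \eqref{eq:slicing-facts} and \eqref{eq:no-boundary-local}, a cycle $Z_r=\partial(T\restr B_r(q))$ with $\Mass(Z_r)\le F'(r)$ and $\|T\|(\partial B_r(q))=0$. By \eqref{eq:tube-local}, $\supp Z_r$ lies in the $\delta$-tube. We set
\[
 A_r=\mathcal H_\#(\cur{[0,1]}\times Z_r).
\]
Along the homotopy $y$ is fixed and $|z|$ decreases, so the tube is preserved. By \cref{lem:vertical-projection}, the speed is at most $2\delta$ and the spatial differential is at most one. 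Hence \eqref{eq:homotopy-formula} gives
\[
 \partial A_r=(\Pi_L)_\#Z_r-Z_r,\qquad \Mass(A_r)\le 2\delta F'(r).
\]

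\emph{Localizing the projection identity.} Write $p=\Pi_L(q)$, so $d_{\Hyp}(q,p)\le 2\delta$ by \eqref{eq:projection-displacement}, and let $s=r-c_0\delta$ for a suitable constant $c_0$. Every point $x\in\supp T\cap B_2(q)$ with $\Pi_Lx\in D^L_s(p)$ satisfies
\[
 d(x,q)\le d(x,\Pi_Lx)+d(\Pi_Lx,p)+d(p,q)<2\delta+s+2\delta\le r .
\]
Therefore \eqref{eq:projection-local} localizes to
\[
 (\Pi_L)_\#(T\restr B_r(q))\restr D^L_s(p)=Q\cur{D^L_s(p)}.
\]
Consequently $(\Pi_L)_\#Z_r$ is supported in $\overline{D^L_{r'}(p)}\setminus D^L_s(p)$, where $r'\le r+2\delta$ because $\Pi_L$ is $1$-Lipschitz.

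\emph{Annulus retraction.} We retract this annulus in $L\cong\Hyp^n$ onto the geodesic sphere $\partial D^L_s(p)$ by moving radially along geodesics from $p$ (geodesic polar coordinates). The displacement is at most $r'-s\le c\delta$. The spatial differential is bounded by $\sinh(s)/\sinh(t)\le 1$ in the angular directions and by $\le 1$ in the radial direction. The retraction's terminal map has rank $\le n-1$, so it annihilates the top-dimensional current $C_r=(\Pi_L)_\#(T\restr B_r)-Q\cur{D^L_s}$. Exactly as in the Euclidean proof, this yields $K_r$ with
\[
 \partial K_r=Q\cur{\partial D^L_s(p)}-(\Pi_L)_\#Z_r,\qquad \Mass(K_r)\le c\delta F'(r).
\]
Then $R_r=-A_r-K_r+Q\cur{D^L_s(p)}$ satisfies $\partial R_r=Z_r$ and
\[
 \Mass(R_r)\le c\delta F'(r)+Q\,|D^L_{1}|_{\Hyp^n},
\]
with $|D^L_1|_{\Hyp^n}=C(n)$. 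The support of $R_r$ lies in $B_{r+c\delta}(q)\subset B_2(q)$.

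\emph{Comparison and integration.} Local minimality applied to $W_r=R_r-T\restr B_r$, as in \cref{lem:Euclidean-squashing}, gives $F(r)\le c_s\delta F'(r)+c_sQ$. With $G=F-c_sQ$ we get $(e^{-r/(c_s\delta)}G)'\ge0$ on $[1/2,1]$, so
\[
 G(1/2)\le e^{-1/(2c_s\delta)}G(1)\le e^{-1/(2c_s\delta)}F(1).
\]
Since also $F\ge0$, this yields \eqref{eq:squashing-integrated}. The dependence is only on $n$, because every constant involved (Lipschitz bounds, disk volume) is dimensional in $n$.

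\emph{Main obstacle.} The delicate step is the localization: we must show that the projected boundary avoids $D^L_s(p)$ and that the hyperbolic annulus retraction is truly non-expanding. The first relies on the triangle inequality through $\Pi_Lx$ together with $d(q,p)\le2\delta$, which forces the choice $s=r-c_0\delta$ with $c_0\ge4$. The second uses the comparison $\sinh$-ratio $\le1$ for inward radial retraction in $\Hyp^n$. If the constants at $r$ near $1/2$ versus the radius $3/2$ in \eqref{eq:projection-local} leave a margin, which they do, no further issue arises.
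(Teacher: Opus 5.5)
Your proposal is correct and follows the paper's proof step for step. You slice at radius $r$ and squash $Z_r$ onto $L$ with the horizontal homotopy. You then localize the projection identity to $D^L_{r-4\delta}(p)$ by the triangle inequality through $\Pi_L x$, and retract the annulus radially in $L$ to obtain $K_r$. Comparing $T\restr B_r(q)$ with the filling $-A_r-K_r+Q\cur{D^L_s(p)}$ gives $F\le c_s\delta F'+c_sQ$, which you integrate as the paper does. The only imprecision is in the mass bound for $K_r$: the angular stretch must be at most one at every time $\tau$, i.e.\ $\sinh((1-\tau)u+\tau s)/\sinh u\le 1$, not only for the terminal map, but this is immediate.
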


\begin{proof}
For almost every $r\in[1/2,1]$, slicing gives the integral cycle
\[
 Z_r=\partial\bigl(T\restr B_r(q)\bigr)
     =\langle T,d_q,r\rangle
\]
and
\begin{equation}\label{eq:slice-derivative}
 \Mass_{\Hyp}(Z_r)\le F'(r).
\end{equation}
Put $p=\Pi_L(q)$.  By \cref{lem:vertical-projection}, every point in the
support of $T\restr B_2(q)$ satisfies
\begin{equation}\label{eq:two-delta-displacements}
 d_{\Hyp}(x,\Pi_Lx)\le2\delta,
 \qquad
 d_{\Hyp}(q,p)\le2\delta.
\end{equation}
Set
\[
 s=r-4\delta.
\]
After decreasing $\delta_*$, one has $s\ge1/4$.

If $x\in\supp Z_r$ and $y=\Pi_Lx$, then
\begin{equation}\label{eq:projected-annulus}
 \bigl|d_L(y,p)-r\bigr|
 \le d_{\Hyp}(x,y)+d_{\Hyp}(q,p)
 \le4\delta.
\end{equation}
Let
\[
 S_r=(\Pi_L)_\#\bigl(T\restr B_r(q)\bigr).
\]
If $y\in D_s^L(p)$ and
$x\in\supp(T\restr B_2(q))$ satisfies $\Pi_Lx=y$, then
\[
 d_{\Hyp}(x,q)
 \le d_{\Hyp}(x,y)+d_L(y,p)+d_{\Hyp}(p,q)
 <2\delta+(r-4\delta)+2\delta=r.
\]
It follows that restriction before or after cutting down to $B_r(q)$
gives the same projected current over $D_s^L(p)$:
\[
 S_r\restr D_s^L(p)
 = (\Pi_L)_\#\bigl(T\restr B_2(q)\bigr)\restr D_s^L(p).
\]
Here the reverse inclusion is immediate from $B_r(q)\subset B_2(q)$, and
the preceding estimate proves the nontrivial inclusion of the relevant
preimages.  Since $s<3/2$, the projected-current identity
\eqref{eq:projection-local} therefore implies
\begin{equation}\label{eq:Sr-inner}
 S_r\restr D_s^L(p)=Q\,\cur{D_s^L(p)}.
\end{equation}
By locality of the boundary in the interior of $D_s^L(p)$,
$(\Pi_L)_\#Z_r=\partial S_r$ has no support there.  Moreover, $S_r$ is
supported in $D_{r+4\delta}^L(p)$.  Thus
\[
 C_r=S_r-Q\,\cur{D_s^L(p)}
\]
is supported in the closed annulus
$\overline{D_{r+4\delta}^L(p)}\setminus D_s^L(p)$ and satisfies
\begin{equation}\label{eq:boundary-Cr}
 \partial C_r=(\Pi_L)_\#Z_r-Q\,\cur{\partial D_s^L(p)}.
\end{equation}

Apply the horizontal homotopy \eqref{eq:horizontal-homotopy} to $Z_r$ and
set
\[
 A_r=\mathcal H_\#\bigl(\cur{[0,1]}\times Z_r\bigr).
\]
The homotopy formula gives
\begin{equation}\label{eq:Ar-boundary}
 \partial A_r=(\Pi_L)_\#Z_r-Z_r,
\end{equation}
and the product-current estimate together with
\cref{lem:vertical-projection} gives
\begin{equation}\label{eq:Ar-mass}
 \Mass_{\Hyp}(A_r)\le c_s\delta\,\Mass_{\Hyp}(Z_r).
\end{equation}

We next work inside $L$.  Write
$y=\exp_p(u\vartheta)$ in polar coordinates and, on the full annulus
$s\le u\le r+4\delta$ containing both $\supp C_r$ and
$\supp((\Pi_L)_\#Z_r)$, define
\begin{equation}\label{eq:radial-homotopy}
 \mathcal R_\tau\bigl(\exp_p(u\vartheta)\bigr)
 =\exp_p\bigl(((1-\tau)u+\tau s)\vartheta\bigr).
\end{equation}
Thus $\mathcal R_1$ is radial retraction to $\partial D_s^L(p)$.  Since
$C_r$ is an $n$-current in the $n$-manifold $L$ and
$\mathcal R_1$ has rank at most $n-1$,
$(\mathcal R_1)_\#C_r=0$.  Applying $\mathcal R_1$ to
\eqref{eq:boundary-Cr} yields
\begin{equation}\label{eq:radial-degree}
 (\mathcal R_1)_\#(\Pi_L)_\#Z_r
 =Q\,\cur{\partial D_s^L(p)}.
\end{equation}
Set
\[
 K_r=\mathcal R_\#\bigl(\cur{[0,1]}\times(\Pi_L)_\#Z_r\bigr).
\]
The homotopy formula and \eqref{eq:radial-degree} give
\begin{equation}\label{eq:Kr-boundary}
 \partial K_r=Q\,\cur{\partial D_s^L(p)}-(\Pi_L)_\#Z_r.
\end{equation}
The radial displacement in \eqref{eq:radial-homotopy} is at most
$8\delta$.  Its spatial differential has norm at most one: the radial factor
is $1-\tau$, while the angular factor is
\[
 \frac{\sinh((1-\tau)u+\tau s)}{\sinh u}\le1.
\]
Since $\Pi_L$ is $1$-Lipschitz,
\begin{equation}\label{eq:Kr-mass}
 \Mass_{\Hyp}(K_r)
 \le c_s\delta\,\Mass_{\Hyp}((\Pi_L)_\#Z_r)
 \le c_s\delta\,\Mass_{\Hyp}(Z_r).
\end{equation}

With compatible orientations, the current
\[
 R_r=-A_r-K_r+Q\,\cur{D_s^L(p)}
\]
satisfies $\partial R_r=Z_r$.  All currents in this construction are
supported in $B_{3/2}(q)$ when $\delta$ is sufficiently small.  Indeed,
the horizontal homotopy moves a point by at most $2\delta$, while every
point in the radial homotopy has distance at most $r+4\delta$ from $p$;
using $d_{\Hyp}(p,q)\le2\delta$ and $r\le1$ gives the assertion after fixing,
for example, $\delta_*<1/12$.  Since
$s\le1$,
\begin{equation}\label{eq:competitor-mass}
 \Mass_{\Hyp}(R_r)
 \le c_s\delta\,\Mass_{\Hyp}(Z_r)
     +Q\operatorname{Vol}_{\Hyp^n}(D_1^L(p))
 \le c_s\delta\,\Mass_{\Hyp}(Z_r)+c_sQ.
\end{equation}
The current
\[
 W_r=R_r-(T\restr B_r(q))
\]
is a cycle compactly supported in $B_{3/2}(q)$.  Since
$\|T\|(\partial B_r(q))=0$, applying
\eqref{eq:local-minimizer-definition} with this $W_r$ and then arguing as in
the Euclidean proof gives $F(r)\le\Mass(R_r)$.  Together with
\eqref{eq:slice-derivative} and
\eqref{eq:competitor-mass}, this implies
\begin{equation}\label{eq:differential-squashing}
 F(r)\le c_s\delta F'(r)+c_sQ
\end{equation}
for almost every $r\in[1/2,1]$.  Setting $G=F-c_sQ$, one obtains
\[
 G'(r)-\frac{1}{c_s\delta}G(r)\ge0.
\]
Hence $r\mapsto e^{-r/(c_s\delta)}G(r)$ is nondecreasing.  Evaluating at
$r=1/2$ and $r=1$ proves \eqref{eq:squashing-integrated}, after increasing
$c_s$ harmlessly.
\end{proof}

\begin{remark}\label{rem:one-sided}
In addition to the projected-current identity
\eqref{eq:projection-local}, the proof uses from the closeness
assumption only the one-sided inclusion
\[
 \supp T\cap B_2(q)
 \subset \{\dist_{\Hyp}(\cdot,L)\le\delta\}.
\]
It does not use the reverse Hausdorff inclusion.  Consequently,
with \eqref{eq:projection-local} retained, the two-sided closeness
hypothesis in \cite[Lemma~3.2]{JiangXie} may be weakened, for the
present argument, to the one-sided tubular inclusion
\eqref{eq:tube-local}.
\end{remark}

\section{Proofs of the Main Theorems}\label{sec:proofs}

\subsection{Solution of Lin's Problem 1}

\begin{proof}[Proof of \cref{thm:Lin-problem}]
Set $L=\R^n\times\{0\}$ and let
$\delta=d_{\rm H}(D_1,\supp T)$.  The one-sided part of the Hausdorff bound
implies
\begin{equation}\label{eq:Lin-global-slab}
 \supp T\subset\{x:\dist(x,D_1)\le\delta\}
 \subset\{x:\dist(x,L)\le\delta\}.
\end{equation}
If $\delta=0$, then $T$ is supported in $L$ and the projection identity gives
$T=Q\cur{D_1}$, so the conclusion is immediate.  We henceforth assume
$\delta>0$.
Fix once and for all $R=1/16$.  If $a\in D_{1/2}$, then
$B_{4R}(a)\subset\mathcal C_1$.  Hence the varifold $V_T$ is stationary in
$B_{4R}(a)$, and \cref{cor:CM-local} gives
\begin{equation}\label{eq:Lin-local-doubling}
 \Mass(T\restr B_{2R}(a))
 \le C_{\rm CM}(n)\Mass(T\restr B_R(a)),
\end{equation}
provided $\delta/R\le\delta_{\rm CM}(n)$.

Put $\theta=\delta/R$.  We next localize the global projection identity.  If
$y\in D_{(2-2\theta)R,{\rm E}}^L(a)$ and $x\in\supp T$ satisfies
$\mathbf p(x)=y$, then \eqref{eq:Lin-global-slab} gives
$|x-y|\le\delta$, and therefore
\[
 |x-a|\le |y-a|+|x-y|<2R-\delta<2R.
\]
No point of $\supp T\setminus B_{2R}(a)$ can consequently contribute to the
pushforward over this disk.  Since
$D_{(2-2\theta)R,{\rm E}}^L(a)\subset D_1$, restriction of
$\mathbf p_\#T=Q\cur{D_1}$ gives
\begin{equation}\label{eq:Lin-local-projection}
 \mathbf p_\#(T\restr B_{2R}(a))
 \restr D_{(2-2\theta)R,{\rm E}}^L(a)
 =Q\cur{D_{(2-2\theta)R,{\rm E}}^L(a)}.
\end{equation}
Thus \cref{lem:Euclidean-squashing} applies and yields
\begin{equation}\label{eq:Lin-local-squashing}
 \Mass(T\restr B_R(a))
 \le e^{-1/(c_E\theta)}\Mass(T\restr B_{2R}(a))+c_EQR^n.
\end{equation}
Combining \eqref{eq:Lin-local-doubling} and
\eqref{eq:Lin-local-squashing}, and choosing $\delta_L(n)$ so that
\[
 \frac{\delta_L}{R}\le\min\{\delta_{\rm CM}/2,1/16\},
 \qquad
 C_{\rm CM}e^{-R/(c_E\delta_L)}\le\frac12,
\]
we obtain the uniform local estimate
\begin{equation}\label{eq:Lin-local-absolute}
 \Mass(T\restr B_R(a))\le C(n)QR^n
 \qquad(a\in D_{1/2}).
\end{equation}

Choose $a_1,\dots,a_N\in D_{1/2}$ so that the disks
$D_{R/2,{\rm E}}^L(a_j)$ cover $D_{1/2}$, with $N\le C(n)$.  Decrease
$\delta_L$ once more so that $\delta_L<R/2$.  If
$x\in\supp T\cap\mathcal C_{1/2}$, then
$\mathbf p(x)\in D_{R/2,{\rm E}}^L(a_j)$ for some $j$ and
$|x-\mathbf p(x)|\le\delta_L$ by \eqref{eq:Lin-global-slab}; hence
$x\in B_R(a_j)$.  Therefore
\[
 \Mass(T\restr\mathcal C_{1/2})
 \le\sum_{j=1}^N\Mass(T\restr B_R(a_j))
 \le C_L(n)Q.
\]
This proves \eqref{eq:Lin-conclusion}.  Notice that neither this covering
argument nor the two local estimates use indecomposability.
\end{proof}

\subsection{The hyperbolic fixed-scale estimate}

\begin{proof}[Proof of \cref{thm:hyperbolic-comparison}]
Choose $\eps_D(n)\le\eps_0(n)$ small enough for
\cref{lem:r-rbar,lem:target-annulus,lem:transverse-energy} and the cutoff
constructions above.  Then all preceding estimates apply to $V$.
Write $b=|P_S\nabla r|^2$.  Away from $p$,
\[
 1=|\nabla r|^2=b+a.
\]
By \cref{lem:tilt-comparison,lem:transverse-energy},
\begin{equation}\label{eq:a-integral}
 \int_{B_{5/4}\setminus B_{1/4}}a\,dV
 \le C(n)\mu_V(B_{1/4}).
\end{equation}
Combining \eqref{eq:a-integral} with \cref{lem:radial-energy} gives
\[
 \mu_V(B_{5/4}\setminus B_{1/4})
 =\int_{B_{5/4}\setminus B_{1/4}}(a+b)\,dV
 \le C(n)\mu_V(B_{1/4}).
\]
Adding $\mu_V(B_{1/4})$ and taking $C_D(n)$ to be the resulting constant
proves \eqref{eq:mass-comparison-main}.  To justify the
piecewise linear cutoff in \eqref{eq:psi}, first perturb its break radii so
that the corresponding spheres have zero $\mu_V$-mass, apply stationarity to
smooth approximations with uniformly bounded derivatives on the transition
annuli, and pass monotonically to the prescribed radii.  The Radon property
of $\mu_V$ and monotone convergence then give the result for the open balls
appearing in the statement.
\end{proof}

\subsection{Proof of the shifted-center form}

\begin{proof}[Proof of \cref{cor:shifted-comparison}]
Let $p_0$ be the intrinsic nearest point of $q$ on $L$.  After decreasing
$\eps_D$, assume $d_{\Hyp}(p_0,q)<1/8$.  Then
\[
 B_{3/2}(p_0)\subset B_{13/8}(q)\subset B_2(q),
\]
so \cref{thm:hyperbolic-comparison} applies at $p_0$.  Moreover,
\[
 B_1(q)\subset B_{9/8}(p_0)\subset B_{5/4}(p_0),
 \qquad
 B_{1/4}(p_0)\subset B_{3/8}(q)\subset B_{1/2}(q).
\]
Consequently,
\[
 \mu_V(B_1(q))
 \le\mu_V(B_{5/4}(p_0))
 \le C_D\mu_V(B_{1/4}(p_0))
 \le C_D\mu_V(B_{1/2}(q)).
\]
\end{proof}

\subsection{Proof of the absolute local mass bound}

\begin{proof}[Proof of \cref{thm:local-mass}]
Set
\[
 F(r)=\Mass_{\Hyp^{\mathfrak m}}\bigl(T\restr B_r(q)\bigr),
 \qquad r\in[1/2,1],
\]
and let $V_T$ be the integral varifold associated with $T$.  By
\eqref{eq:local-minimizer-definition}, \eqref{eq:no-boundary-local}, and the
first variation of mass, $V_T$ is stationary in $B_2(q)$.  After requiring
$\delta_*$ to be smaller than the threshold in
\cref{cor:shifted-comparison}, that corollary gives
\begin{equation}\label{eq:doubling-F}
 F(1)\le C_DF(1/2).
\end{equation}
By \cref{lem:squashing},
\[
 F(1)
 \le C_D\exp\!\left(-\frac{1}{2c_s\delta}\right)F(1)
      +C_Dc_sQ.
\]
Decrease $\delta_*(n)$ once more so that
\[
 C_D\exp\!\left(-\frac{1}{2c_s\delta_*}\right)\le\frac12.
\]
Absorption yields $F(1)\le2C_Dc_sQ$, which is
\eqref{eq:mass-local}.
\end{proof}

\subsection{Removal of the local mass-growth hypothesis}

\begin{proof}[Proof of \cref{thm:remove-growth}]
Apply \cref{prop:boundary-localization}.  For
$P\in\supp T\cap G_{R_0}$ with $t=x^{\mathfrak m}(P)<t_0$, its three
conclusions verify \eqref{eq:no-boundary-local}--\eqref{eq:projection-local}
for $\widehat T_P=(\Phi_P)_\#T$, $q=q_P$, $L=L_P$, $p=p_P$, $Q=1$, and
$\delta=C_\Gamma t^\alpha$.  Decreasing $t_0$ if necessary ensures
$\delta\le\delta_*(n)$.  Hence \cref{thm:local-mass} gives
\[
 \Mass_{\Hyp^{\mathfrak m}}
 \bigl(\widehat T_P\restr B_1(q_P)\bigr)\le C(n).
\]
Since \eqref{eq:boundary-rescaling} is a hyperbolic isometry, this is exactly
\eqref{eq:uniform-at-infinity}.

In \cite{JiangXie}, condition~(1.10) is used in
\cite[Lemma~3.4]{JiangXie} only to convert the exponentially small squashing
factor into a uniform unit-ball mass bound.  The preceding estimate supplies
that bound directly.  The remainder of the proof of
\cite[Theorem~1.4]{JiangXie} is unchanged.  The later PDE arguments begin
with the graphical $C^{1,\alpha}$ conclusion of that theorem and do not use
condition~(1.10) independently; therefore the conclusions of
\cite[Theorems~1.6, 1.7, 1.9, and~1.10]{JiangXie},
\cite[Corollary~1.8]{JiangXie}, and \cite[Theorem~7.4]{JiangXie} remain valid without condition~(1.10).  Hence the later boundary regularity,
expansion, and convergence results listed in the Introduction follow under
their respective additional regularity hypotheses.
\end{proof}

\section{Further applications of the mass-bound result}\label{sec:app}

In this section, we discuss further  applications of Theorem \ref{thm:Lin-problem}. 
As a corollary of Theorem \ref{thm:Lin-problem} and Allard’s interior regularity theorem, we first obtain the following height‑only interior regularity theorem in multiplicity one case. No a priori mass‑ratio bound is required.
\begin{theorem}
\label{thm:height-only}
Let $\alpha\in(0,1)$. There exist
\[
  \delta_{\mathrm{reg}}
  =\delta_{\mathrm{reg}}(n,k,\alpha)>0,
  \qquad
  C_{\mathrm{reg}}
  =C_{\mathrm{reg}}(n,k,\alpha)<\infty
\]
with the following property. Let $T$ be a locally area-minimizing integral
$n$-current in $\mathbb R^{n+k}$ satisfying
\[
  \mathbf p_\#T=\llbracket D_1\rrbracket,
  \qquad
  (\partial T)\llcorner\mathcal C_1=0,
  \qquad
  d_{\mathrm H}(D_1,\operatorname{supp}T)\leq\delta
  \leq\delta_{\mathrm{reg}}.
\]
Then there exists $u\in C^\infty(D_{1/4};\mathbb R^k)$ such that
\[
  T\llcorner\mathcal C_{1/4}=\mathbf G_u
\]
and
\[
  \|u\|_{C^{1,\alpha}(D_{1/4})}
  \leq C_{\mathrm{reg}}\delta.
\]
\end{theorem}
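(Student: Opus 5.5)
The plan is to reduce to Allard's interior regularity theorem. \cref{thm:Lin-problem} with $Q=1$ provides the mass bound that is otherwise missing. A compactness argument then upgrades this bound to the almost-unit density ratio that Allard requires. First, assume $\delta_{\rm reg}\le\delta_L$. Then \cref{thm:Lin-problem} gives $\Mass(T\restr\mathcal C_{1/2})\le C_L$. The slab inclusion $\supp T\subset\{\dist(\cdot,\R^n\times\{0\})\le\delta\}$ also holds, exactly as in \eqref{eq:Lin-global-slab}. The associated varifold $V_T$ is stationary in $\mathcal C_1$.

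The key step, and the one I expect to be the main obstacle, is showing that the mass ratios are close to one. Precisely: for every $\eta>0$ there is $\delta(\eta,n,k)$ such that $\delta\le\delta(\eta)$ implies
\[
\bigl|\,\omega_n^{-1}\rho^{-n}\Mass(T\restr B_\rho(x))-1\,\bigr|<\eta
\quad\text{for all } x\in\supp T\cap\mathcal C_{3/8},\ \rho=\tfrac1{16}.
\]
I would argue by contradiction. Take a sequence $T_j$ with $\delta_j\to0$ that violates this. The uniform bound $\Mass(T_j\restr\mathcal C_{1/2})\le C_L$ allows Federer--Fleming compactness together with the compactness of area-minimizing currents; this yields an area-minimizing limit $T_\infty$ in $\mathcal C_{1/2}$, with convergence of mass measures (minimizers have no mass loss in the limit). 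The limit is supported in $D_{1/2}$, and $\mathbf p_\#T_\infty=\cur{D_{1/2}}$ because the projection identity passes to flat limits after localization, as in \eqref{eq:Lin-local-projection}. By the Constancy Theorem, $T_\infty=\cur{D_{1/2}}$. Choosing radii whose spheres carry no limit mass, the mass convergence then contradicts the assumed violation. The one delicate point is handling the boundary of $\mathcal C_{1/2}$; I would avoid it by working only on balls compactly contained in $\mathcal C_{1/2}$, where minimality holds because $(\partial T_j)\restr\mathcal C_1=0$.

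With density ratio close to $1$ and height at most $\delta$, I apply Allard's regularity theorem at each point of $D_{1/4}$ at a fixed scale. To get estimates that are linear in $\delta$, I would first use the Caccioppoli-type (reverse Poincar\'e) inequality for stationary varifolds. It bounds the tilt excess by the $L^2$ height, so $E\le C\delta^2$ on a slightly smaller ball, using the mass bound. Allard's estimate then gives, on $B_{1/8}(x)$, that $\supp T$ is the graph of a $C^{1,\alpha}$ function with $C^{1,\alpha}$ norm at most $C(E^{1/2}+\delta)\le C\delta$. Density near one forces multiplicity one. The identity $\mathbf p_\#T=\cur{D_1}$ fixes the orientation, which gives $T\restr\mathcal C_{1/4}=\mathbf G_u$. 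The local graphs agree on overlaps and so patch into a single $u$ on $D_{1/4}$.

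Finally, $u$ is a $C^{1,\alpha}$ weak solution of the minimal surface system with small gradient. Schauder theory for the divergence-form system, applied by bootstrapping (Morrey), gives $u\in C^\infty$. The constants depend on $k$ only through Allard's theorem and the compactness step, consistent with $\delta_{\rm reg}(n,k,\alpha)$ and $C_{\rm reg}(n,k,\alpha)$.
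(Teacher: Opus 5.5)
Your proposal is correct and follows the route the paper indicates. The paper gives no written proof; it only says the result follows from \cref{thm:Lin-problem} (with $Q=1$) together with Allard's interior regularity theorem and that the argument is standard. Your outline supplies that argument: the mass bound, then compactness using mass-measure convergence of minimizers and the projection identity to force density ratios near one, then Allard with a Caccioppoli bound for the linear-in-$\delta$ estimate, then Schauder bootstrapping.

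One small correction: Allard's graph conclusion holds on $B_{\gamma/16}(x)$ with $\gamma=\gamma(n,k,\alpha)$, not on $B_{1/8}(x)$. So you should cover $D_{1/4}$ by finitely many such small disks before patching.
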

The proof is standard, and the theorem yields a small‑oscillation estimate for high‑codimensional area‑minimizing graphs.
\begin{corollary}
\label{cor:small-oscillation}
Let $\alpha\in(0,1)$. There exist
$\delta_{\mathrm g}=\delta_{\mathrm g}(n,k,\alpha)>0$
and $C=C(n,k,\alpha)<\infty$ such that the following holds.
Suppose $u:D_1\to\mathbb R^k$ is smooth and its graph current
$\mathbf G_u$ is area minimizing in $\mathcal C_1$. If
\[
  \operatorname{osc}_{D_1}u\leq\delta_{\mathrm g},
\]
then
\[
  \|Du\|_{C^{0,\alpha}(D_{1/4})}
  \leq C\operatorname{osc}_{D_1}u.
\]
\end{corollary}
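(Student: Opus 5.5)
The plan is to reduce \cref{cor:small-oscillation} to \cref{thm:height-only} by subtracting a constant and rescaling vertically, so that the support of $\mathbf G_u$ lies in a thin slab about $D_1$. Write $\omega=\operatorname{osc}_{D_1}u$. Choose $c\in\R^k$ with $|u(x)-c|\le\omega$ for all $x\in D_1$; for instance, take $c=u(0)$. Replace $u$ by $\tilde u=u-c$. Vertical translation is an isometry preserving $\mathbf p$, so $\mathbf G_{\tilde u}$ is still area minimizing in $\mathcal C_1$, and $D\tilde u=Du$.

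Next verify the hypotheses of \cref{thm:height-only} for $T=\mathbf G_{\tilde u}\restr\mathcal C_1$. There are three points to check.
\begin{enumerate}
\item The projection identity $\mathbf p_\#T=\cur{D_1}$ holds because $\mathbf p$ restricted to the graph is the inverse of $x\mapsto(x,\tilde u(x))$.
\item The boundary condition $(\partial T)\restr\mathcal C_1=0$ holds, since the graph of a smooth function over the open disk has no boundary inside the cylinder.
\item Every point $(x,\tilde u(x))$ lies within distance $|\tilde u(x)|\le\omega$ of $(x,0)\in D_1$. Conversely, every point $(x,0)$ of $D_1$ lies within $\omega$ of the graph. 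Hence $d_{\rm H}(D_1,\supp T)\le\omega$, after taking closures.
\end{enumerate}
Setting $\delta_{\rm g}=\delta_{\rm reg}(n,k,\alpha)$, the hypothesis $\omega\le\delta_{\rm g}$ lets us apply the theorem with $\delta=\omega$. It yields a smooth $v$ with $T\restr\mathcal C_{1/4}=\mathbf G_v$ and $\|v\|_{C^{1,\alpha}(D_{1/4})}\le C_{\rm reg}\,\omega$.

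By uniqueness of the graph representation over $D_{1/4}$, we get $v=\tilde u$ there. Hence
\[
 \|Du\|_{C^{0,\alpha}(D_{1/4})}\le\|\tilde u\|_{C^{1,\alpha}(D_{1/4})}\le C_{\rm reg}\,\omega .
\]

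The main subtlety is the precise sense in which \cref{thm:height-only} demands a current on all of $\R^{n+k}$ that is locally area minimizing. The graph is only given to be minimizing in $\mathcal C_1$, and we need to interpret $T$ as $\mathbf G_{\tilde u}\restr\mathcal C_1$. Its boundary lies on $\partial\mathcal C_1$, and its minimality is local inside the cylinder, which is all the interior argument (Allard plus \cref{thm:Lin-problem}) uses. If the statement is read strictly, I would instead apply \cref{thm:height-only} on a slightly smaller disk $D_{1-\eta}$, rescaled to unit size. The rescaled oscillation is at most $\omega/(1-\eta)$, and the conclusion on $D_{1/4}$ follows after adjusting constants and covering.
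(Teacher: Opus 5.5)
Your proposal is correct and follows the route the paper intends. The paper calls the argument standard and presents \cref{cor:small-oscillation} as a direct consequence of \cref{thm:height-only}, applied exactly as you do: subtract a constant from $u$ and apply the theorem to the graph current with $\delta=\operatorname{osc}_{D_1}u$. For the strict reading you flag, your fallback is justified because $\mathbf G_u\restr\mathcal C_{1-\eta}$ is minimizing in all of $\R^{n+k}$: the nearest-point retraction onto a compact convex subset of $\mathcal C_1$ containing its support is $1$-Lipschitz and fixes its boundary, so any competitor can be pushed into $\mathcal C_1$ without increasing mass.
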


There is also a global rigidity consequence: a degree‑one Bernstein‑type rigidity result under sublinear confinement.  
 Put
$L=\mathbb R^n\times\{0\}$ and let
$\mathbf p:\mathbb R^{n+k}\to L$ be orthogonal projection.  Define
\begin{equation}
  \mathsf h_T(R)
  =\sup\bigl\{\operatorname{dist}(x,L):
        x\in\operatorname{supp}T,
        |\mathbf p(x)|\leq R\bigr\},
  \label{eq:global-height-function}
\end{equation}
which may take the value $+\infty.$

\begin{theorem}
\label{thm:degree-one-bernstein}
Let $T$ be a locally area-minimizing integral $n$-current in
$\mathbb R^{n+k}$ such that
\begin{equation}
  \partial T=0,
  \qquad
  \mathbf p_\#T=\llbracket L\rrbracket
  \label{eq:bernstein-hypotheses}
\end{equation}
as locally finite currents, and
\begin{equation}
  \lim_{R\to\infty}\frac{\mathsf h_T(R)}{R}=0,
  \label{eq:bernstein-sublinear-height}
\end{equation}
then there is a vector $b\in L^\perp$ such that
\begin{equation}
  T=\llbracket L+b\rrbracket.
  \label{eq:bernstein-conclusion}
\end{equation}
\end{theorem}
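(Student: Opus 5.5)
Our plan is to rescale \cref{thm:Lin-problem} to every large scale, which yields Euclidean mass growth $\Mass(T\restr B_R)\le CR^n$. We then pass to a tangent cone at infinity, identify it as the plane $\cur{L}$, and conclude flatness by monotonicity.

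\emph{Step 1: uniform density bound.} For $R>0$ let $\lambda_R(x)=x/R$ and $T_R=(\lambda_R)_\#T$. Then $\mathbf p_\#T_R=\cur{L}$ and $\partial T_R=0$. Over the unit disk, $\supp T_R$ lies in the slab $\{\dist(\cdot,L)\le \mathsf h_T(R)/R\}$, and this height tends to $0$ by \eqref{eq:bernstein-sublinear-height}. \cref{thm:Lin-problem} is stated with a hypothesis $\mathbf p_\#T=Q\cur{D_1}$ on the whole current, so we apply it to $T_R\restr\mathcal C_1$ instead. Its projection equals $\cur{D_1}$, since points over $D_1$ are exactly those in $\mathcal C_1$. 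Its boundary inside $\mathcal C_1$ vanishes. Area minimality restricts to it. The one-sided slab inclusion is all the proof of \cref{thm:Lin-problem} actually uses (it only uses \eqref{eq:Lin-global-slab}). We therefore obtain $\Mass(T\restr\mathcal C_{R/2})\le C_L R^n$ for all large $R$. The conclusion at bounded $R$ follows by monotonicity of $R\mapsto\Mass(T\restr\mathcal C_{R/2})$. Since $B_{R/2}\subset\mathcal C_{R/2}$, the density ratios $\Theta(T,0,R)$ are uniformly bounded.

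\emph{Step 2: tangent cone at infinity.} By compactness for area-minimizing currents with locally bounded mass, a subsequence $T_{R_j}$ with $R_j\to\infty$ converges to an area-minimizing cone $C$ with $\partial C=0$. Masses converge and the limit is a cone by monotonicity. The height bound passes to the limit, so $\supp C\subset L$. Projection commutes with flat limits, giving $\mathbf p_\#C=\cur{L}$. A cycle supported in $L$ is, by the Constancy Theorem, an integer multiple of $\cur{L}$, so $C=\cur{L}$ and the density at infinity is $\Theta_\infty=1$. The same reasoning shows that the density at any point is an integer at least $1$. By monotonicity, $1\le\Theta(T,x)\le\Theta_\infty=1$ for every $x\in\supp T$, so equality holds in the monotonicity formula. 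Hence $T$ is a cone about each of its points, and therefore a multiplicity-one plane $P$.

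\emph{Step 3: identify the plane.} Since $\mathbf p_\#\cur{P}=\cur{L}$, the plane $P$ projects onto $L$ with degree one, so $P$ is a graph of an affine map over $L$. Sublinear height forces the linear part to vanish, hence $P=L+b$ with the same orientation.

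The main obstacle is Step 1. The projection identity has to be localized correctly for the restricted current $T_R\restr\mathcal C_1$, and one must check that \cref{thm:Lin-problem} needs only the slab inclusion rather than the full Hausdorff closeness. If necessary we would instead invoke its proof directly, namely \eqref{eq:Lin-local-absolute} together with the covering argument.
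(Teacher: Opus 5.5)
Your proposal is correct and follows essentially the same route as the paper: blow down, use the vanishing height and the projection identity to identify the limit as $\cur{L}$ with density one at infinity, then conclude by monotonicity (density $\ge1$ on the support forces a multiplicity-one plane) and rule out a tilted graph by sublinear height. The main difference is that your Step 1 spells out the mass bound the paper leaves implicit when it asserts $T_R\rightharpoonup\cur{L}$. You apply \cref{thm:Lin-problem} to $T_R\restr\mathcal C_1$ and note that its proof uses only the one-sided slab inclusion, whereas the paper instead checks the reverse Hausdorff closeness through properness of $\mathbf p$ on $\supp T$.
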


\begin{proof}
For $R>0$, let $\eta_R(x)=R^{-1}x$ and set
\[
  T_R=(\eta_R)_\#T.
\]
Fix $A>1$.  On the portion of $\operatorname{supp}T_R$ whose projection
lies in $D_{2A}^L(0)$, one has
\begin{equation}
  \operatorname{dist}(x,L)
  \leq\varepsilon_R(A)
  :=\frac{\mathsf h_T(2AR)}{R},
  \qquad
  \varepsilon_R(A)\longrightarrow0.
  \label{eq:blow-down-height}
\end{equation}
The reverse closeness to $L$ also holds.  Indeed, the height assumption \eqref{eq:bernstein-sublinear-height} implies that
$\mathbf p|_{\operatorname{supp}T}$ is proper and
$\mathbf p(\operatorname{supp}T)$ is closed.  Since
\[
  \operatorname{supp}(\mathbf p_\#T)=L,
\]
it follows that $\mathbf p(\operatorname{supp}T)=L$.  Thus every point of
$D_{2A}^L(0)$ has a preimage in $\operatorname{supp}T_R$, and that preimage
has height at most $\varepsilon_R(A)$.

Furthermore,
\[
  \partial T_R=0,
  \qquad
  \mathbf p_\#T_R=\llbracket L\rrbracket.
\]
One has
\begin{equation}
  T_R\rightharpoonup\llbracket L\rrbracket,
  \qquad
  V_{T_R}\rightharpoonup\mathbf v(L,1)
  \quad\text{locally as }R\to\infty,
  \label{eq:blow-down-convergence}
\end{equation}
where $\mathbf v(L,1)$ is the varifold associated with $L.$

Fix $q\in\operatorname{supp}T$.  Since $q/R\to0$ and
$\mathcal H^n(L\cap\partial B_1(0))=0$,
\eqref{eq:blow-down-convergence} implies
\begin{equation}
  \lim_{R\to\infty} \Theta_T(q,R):= \lim_{R\to\infty}
  \frac{\|T\|(B_R(q))}{\omega_nR^n}=1.
  \label{eq:density-at-infinity}
\end{equation}
The monotonicity formula says that
\[
  r\longmapsto\Theta_T(q,r)
\]
is nondecreasing.  A nonzero area-minimizing integral current has density
at least one at every point of its support.  Hence
\[
  1\leq\lim_{r\downarrow0}\Theta_T(q,r)
  \leq\Theta_T(q,r)
  \leq\lim_{R\to\infty}\Theta_T(q,R)=1.
\]
Consequently,
\begin{equation}
  \Theta_T(q,r)=1
  \qquad\text{for every }r>0.
  \label{eq:all-mass-ratios-one}
\end{equation}
Equality in the monotonicity formula makes $T$ a cone about $q$; a
stationary integral cone of density one is a multiplicity-one plane.
Therefore
\[
  T=\llbracket P\rrbracket
\]
for an oriented affine $n$-plane $P$.

The projected-current identity implies that $\mathbf p|_P$ has rank $n$
and positive degree one, so $P$ is a graph over $L$:
\[
  P=\{x+Ax+b:x\in L\}
\]
for a linear map $A:L\to L^\perp$ and some $b\in L^\perp$.  If $A\neq0$,
then $\mathsf h_T(R)/R$ stays bounded below by a positive constant along a
sequence $R\to\infty$.  The sublinear-confinement hypothesis forces
$A=0$, and hence $P=L+b$.
\end{proof}

\end{document}